\newtheorem{theorem}{Theorem}[section]
\newtheorem{lemma}[theorem]{Lemma}
\newtheorem{proposition}[theorem]{Proposition}
\theoremstyle{definition}
\newtheorem{example}[theorem]{Example}
\newtheorem{remark}[theorem]{Remark}
\numberwithin{table}{section}
\numberwithin{equation}{section}
\begin{document}
\title{Multicentric calculus and the Riesz projection} 
\author{ Diana Apetrei and Olavi Nevanlinna }
%\date{}                                           % Activate to display a given date or no date
\maketitle

\begin{center}
{\footnotesize\em 
Aalto University\\
Department of Mathematics and Systems Analysis\\
P.O.Box 11100\\
Otakaari 1M, Espoo\\
FI-00076 Aalto, Finland\\
 email:\\ Diana.Apetrei@aalto.fi\\Olavi.Nevanlinna\symbol{'100}aalto.fi\\[3pt]
}
\end{center}

\begin{abstract}
In multicentric holomorphic calculus one  represents the function $\varphi$  using a new polynomial variable $w=p(z)$ in such a way that when it is evaluated at the operator $A,$ then  $p(A)$ is small in norm. Usually it is assumed that $p$ has distinct roots. In this paper we discuss two related problems, the separation of a compact set (such as the spectrum) into different components by a polynomial lemniscate, respectively the application of the Calculus to the computation and the estimation of the Riesz spectral projection.  It may then become desirable the use of $p(z)^n$ as a new variable. We also develop the necessary modifications to incorporate the multiplicities in the roots.  
 
\end{abstract}

\bigskip
{\footnotesize\em
{ keywords:}  multicentric calculus, Riesz projections, spectral projections, sign function of an operator, lemniscates.  

}

\vspace{1cm}
This  is  a preprint of article \cite{l}, to appear, and  it is essentially equivalent with the article, but contains in appendix material which is not included in the article.
%%%%%%%%%%%%%%%%%%%%%%%%%%%%%%%%%%%%%%%%%%%%%%%%%%%%%%%%%%%%%%%%%%%%%%%%%%%%%%%%%%%%%%%%%%%%%%%%%
%%%%%%%%%%%%%%%%%%%%%%%%%%%%%%%%%%%%%%%%%%%%%%%%%%%%%%%%%%%%%%%%%%%%%%%%%%%%%%%%%%%%%%%%%%%%%%%%%
\section{Introduction}

Let $p(z)$ be a polynomial of degree $d$ with distinct roots $\lambda_1, \dots, \lambda_d$.  In  multicentric holomorphic calculus the polynomial is taken as a new variable $w=p(z)$ and functions $\varphi(z)$ are represented with the help of a vector-valued function $f$, mapping $w\mapsto f(w) \in \mathbb C^d$, \cite{a}.  For example, sets bounded by  lemniscates $|p(z)| = \rho$ are then mapped onto discs $|w| \le \rho$ and for $\rho$  small, $f$ has a rapidly converging Taylor series. 

The multicentric representation then yields a functional calculus  for operators (or matrices) $A$, if one has  found a polynomial $p$ such that $\|p(A)\|$ is small.   In fact, denote 
$$V_p(A)=\{ z \in \mathbb C \ : \ |p(z)| \le \|p(A)\| \}
$$
and observe that, by spectral mapping theorem,  the spectrum  $\sigma(A)$ satisfies 
$\sigma(A) \subset V_p(A)$.  If $f$ has a rapidly  converging Taylor series for $|w|  \le \|p(A)\|$, then $\varphi(A)$ can be written down by a  rapidly converging explicit series expansion.

Since $V_p(A)$ can have several components, one can define $\varphi=1$ in the neighborhood of some components, while $\varphi=0$ in a neighborhood of the others.
Then  $\varphi(A)$ represents the spectral projection onto the invariant subspace corresponding to the part of the spectrum where $\varphi=1$. 
The spectral projection
satisfies
$$
\varphi(A)=\frac{1}{2\pi i} \int_\gamma  (\lambda I-A)^{-1} d\lambda ,
$$
where $\gamma$ surrounds the appropriate componens of the spectrum, but the computational approach does not need the evaluation of the contour integral.
The coefficients for the Taylor series of $f$ can be computed with explicit recursion from those of $\varphi$ at the {\it local centers} $\lambda_j$ \cite{a}.  The approach also  yields a bound for $\| \varphi(A)\|$ by a generalization of the von Neumann theorem for contractions, see \cite{b}.   If the scalar function $\varphi$ is not holomorphic at the spectrum, the multicentric calculus leads to a new functional calculus to deal with, e.g. nontrivial Jordan blocks \cite{k}.

In this paper we take a closer look at the computation of spectral projections in   finding the stable and unstable invariant subspaces of an operator $A$.   The direct approach would be to ask for a polynomial $p$ with distinct roots such that 
$$
V_p(A) \cap i\mathbb R = \emptyset
$$
and then apply the calculus with $\varphi=1$ for $\textnormal{Re } z>0,$ respectively  $\varphi=0$ for $\textnormal{Re } z<0$.   However, we discuss this as two different subiects, one being the separation of the spectrum and the other being the computation of the projection. 

In order to discuss the separation, we denote
$$
V(p,\rho) = \{z \in \mathbb C  \ : |p(z) | \leq \rho\}
$$
and we let $K=\sigma(A),$ then we ask what is the minimal degree of a polynomial such that 
\begin{equation}\label{ekakysymys}
K \subset V(p,\rho)  \text { and }  V(p,\rho) \cap i\mathbb R = \emptyset.
\end{equation}
holds. By Hilbert's lemniscate theorem, see e.g. \cite{i},  such a polynomial with a minimal degree always exists. We model this question by  considering in place of $\sigma(A)$  two lines parallel to the imaginary axis as follows
$$
K= \{z=x+iy \ : \ x\in \{-1,1\}, \  |y| \le \tan(\alpha)\}
$$
and derive a sample of polynomials for which (\ref{ekakysymys}) holds when $\alpha$ grows. For $\alpha <\pi/4,$ the  minimal degree is  clearly $2$ but in general we are not able to prove exact lower bounds. 

Whenever  (\ref{ekakysymys}) holds, then the series expansion of $f$ representing $\varphi=1$ for $\textnormal{Re } z>0$ and $\varphi=0$ for $\textnormal{Re } z<0$ converges  in  $p(K),$ and if $\sigma(A)\subset K$ then we do obtain a convergent expresion for the projection onto to unstable invariant subspace.  However, whenever $p(A)$ would be nonnormal,  it could happen that  $V_p(A)  \cap i\mathbb R \not=\emptyset$, and then we would not get a bound for the projection by the generalization of von Neumann theorem.  To overcome this, note that from the spectral radius formula $r(B)= \lim \|B^n\|^{1/n}$  in such a case, there does exist an integer $n$ such that  with $q=p^n$ we have
$
V_q(A)\cap i\mathbb R =\emptyset$.

This leads to the other topic discussed in this paper.  With $q=p^n,$ the new variable there are no longer simple zeros and we shall therefore  derive the multicentric representations needed in this case.  This is done in Section $2$  together  while the model problem is discussed in Section $3$.   In Section $4$ we describe how to compute the series expansions when working with different monic polynomials and in the end we discuss a nonnormal small dimensional problem with $q(z)=(z^2-1)^n$.
 
%%%%%%%%%%%%%%%%%%%%%%%%%%%%%%%%%%%%%%%%%%%%%%%%%%%%%%%%%%%%%%%%%%%%%%%%%%%%
%%%%%%%%%%%%%%%%%%%%%%%%%%%%%%%%%%%%%%%%%%%%%%%%%%%%%%%%%%%%%%%%%%%%%%%%%%%%
\section{Representations and main estimates}

%%%%%%%%%%%%%%%%%%%%%%%%%%%%%%%%%%%%%%%%%%%%%%%%%%%%%%%%%%%%%%%%%%%%%%%%%%%%
\subsection{Formulas and estimates}

 We need the basic formula for expressing a given function $\varphi(z)$ as a linear combination of functions $f_{j,k}(w^n)$ when $w=p(z),$ for $n$ a given positive integer.
 
 Let $p(z)$ be the monic polynomial of degree $d$ with distinct roots $\lambda_1, \dots ,\lambda_d.$ We denote by $\delta_k\in\mathbb{P}_{d-1}$ the Lagrange interpolation basis polynomials at $\lambda_j$
 
 $$ \delta_k(\lambda)=\frac{1}{p'(\lambda_k)} \prod_{j\neq k} (\lambda-\lambda_j). $$

Then the multicentric representation of $\varphi$ takes the form 
\begin{equation}
 \varphi(z)= \sum_{j=1}^{d} \delta_j(z)f_j(w)  \label{phiA},   \ \ \text {where  } \ \  w=p(z)
\end{equation}
and $f_j$'s are obtained from $ \varphi$ with the formula \cite{b} 
\begin{equation}\label{kaanteiskaava}
f_j(w) = \sum_{l=1}^d \delta_l(\lambda_j,w)\varphi(\zeta_l(w)),
\end{equation}
where  
$\zeta_l(w)$ denote the roots of $ p(\lambda)-w=0 $ and 
$$
\delta_l(\lambda,w)=\frac{p(\lambda)-w}{p'(\zeta_l(w))(\lambda-\zeta_l(w))}.
$$ 
 
When $\varphi$ is holomorphic, $f$ can also be computed from the  Taylor coefficients of $\varphi$ at the local centers $\lambda_j$.   In fact, 
$$
f_j(w)= \sum_{n=0}^\infty \frac{1}{n!} f_j^{(n)}(0) w^n,
$$
where $f_j^{(n)}(0)$ can be computed  recursively:\\ \\
$
\noindent (p'(\lambda_j))^nf_j^{(n)}(0)= \nonumber
$
\begin{equation}\label{ihanoikein}
 \varphi^{(n)}(\lambda_j) - \sum_{k=1}^d \sum_{m=0}^{n-1} 
{n\choose m} \delta_k^{(n-m)}(\lambda_j) \sum_{l=0}^m b_{m l}(\lambda_j)
f_k^{(l)} (0) -  \sum_{l=0}^{n-1} b_{nl}(\lambda_j)f_j^{(l)}(0).
\end{equation}
Here the  polynomials $b_{nm}$ are determined by
$$
b_{n+1,m}= b_{n,m-1} p' + b_{nm}'
$$
with
$b_{n0}=0, b_{1,1}=p'$ and $b_{nm}=0$ for $m>n$, see Proposition 4.3 in \cite{a}.

{\bf Remark}
Erratum: the last term on the right of (\ref{ihanoikein})  is missing from the formula (4.2) of \cite{a}).

Since the computations for $f_j$'s are done with power series, we can move to $p(z)^{n}=w^{n},$ because the expansions are done for that variable. Therefore we formulate the next theorem.

 \begin{theorem}\label{Thm2.1}
Suppose $p$ has simple zeros and assume  $\varphi$ is holomorphic in a neighborhood of $V(p,\rho)=\{z\in \mathbb C \ : \ |p(z)|\le \rho\}$ and given in the form
$$
\varphi(z) = \sum_{j=1}^d \delta_j(z) f_j(w),  \ \ \text { where } w=p(z).
$$
  Then  
$$
\varphi(z) = \sum_{j=1}^d \delta_j(z) [f_{j,0}(w^n) + \cdots + w^{n-1} f_{j, n-1}(w^n)],
$$
where  $f_{j,k}$  are  holomorphic in a neighborhood  of the disc $|w| \le \rho$ and given by
$$
 w^k f_{j,k}(w^n) =
  \frac{1}{n } \{ f_j(w)+ e^{-2\pi ik/n}f_j(e^{2\pi i/n}w) +\dots 
+e^{-2\pi i(n-1)k/n} f_j(e^{2\pi i(n-1)/n} w)\}.
$$

\end{theorem}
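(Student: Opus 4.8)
The plan is to treat each local center $j$ separately and to recognize the asserted decomposition as the splitting of the Taylor series of $f_j$ about $0$ into the parts indexed by the residue of the exponent modulo $n$; this is the discrete Fourier decomposition with respect to the cyclic group generated by $\omega := e^{2\pi i/n}$, and is exactly the $n$-fold analogue of splitting a function into even and odd parts. Fix $j$. Since $\varphi$ is holomorphic in a neighborhood of $V(p,\rho)$, the function $f_j$ is holomorphic on a disc $\{\,|w| < R\,\}$ with $R > \rho$ (this is visible from the inversion formula \eqref{kaanteiskaava}, or may simply be taken as the standing hypothesis); write $f_j(w) = \sum_{l\ge 0} c_l w^l$ there.

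First I would \emph{define} the candidate functions by grouping monomials according to the residue of the exponent:
$$ f_{j,k}(u) \ :=\ \sum_{s\ge 0} c_{sn+k}\,u^s, \qquad k = 0, 1, \dots, n-1 , $$
so that $w^k f_{j,k}(w^n) = \sum_{s\ge 0} c_{sn+k}\, w^{sn+k}$ is precisely the part of the Taylor series of $f_j$ consisting of the monomials with exponent $\equiv k \pmod n$. Summing over $k$ recovers $f_j(w) = \sum_{k=0}^{n-1} w^k f_{j,k}(w^n)$, and substituting this into $\varphi(z) = \sum_{j} \delta_j(z) f_j(w)$ yields the stated form of $\varphi$.

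Second I would check that each $f_{j,k}$ is holomorphic in a neighborhood of the closed disc $\{\,|u| \le \rho^n\,\}$, equivalently that $w \mapsto f_{j,k}(w^n)$ is holomorphic in a neighborhood of $\{\,|w|\le\rho\,\}$. From $\limsup_l |c_l|^{1/l} \le 1/R$ and $(sn+k)/s \to n$ one gets $\limsup_s |c_{sn+k}|^{1/s} \le 1/R^n < 1/\rho^n$, so the radius of convergence of $f_{j,k}$ as a series in $u$ exceeds $\rho^n$. Alternatively, once the averaging formula below is established one may read holomorphy directly off its right-hand side.

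Third I would identify $f_{j,k}$ with the averaging formula in the statement. Using the linearity of the finite sum over $m$ and the absolute convergence of the Taylor series on compact subsets of $\{\,|w| < R\,\}$,
$$ \frac{1}{n}\sum_{m=0}^{n-1} \omega^{-mk}\, f_j(\omega^m w) \ =\ \frac{1}{n}\sum_{l\ge 0} c_l\, w^l \sum_{m=0}^{n-1}\omega^{m(l-k)} \ =\ \sum_{s\ge 0} c_{sn+k}\, w^{sn+k} \ =\ w^k f_{j,k}(w^n), $$
because the inner geometric sum $\sum_{m=0}^{n-1}\omega^{m(l-k)}$ equals $n$ when $n\mid (l-k)$ and vanishes otherwise. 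This is exactly the claimed identity, and the theorem follows. The argument is essentially routine; the only genuinely delicate point is the upgrade from ``holomorphic in a \emph{neighborhood} of the closed disc'' to the strict inequality $R>\rho$, which is what makes each $f_{j,k}(w^n)$ holomorphic on a full neighborhood of $\{\,|w|\le\rho\,\}$ rather than merely on the open disc, together with the accompanying arithmetic-progression estimate for the radius of convergence. Nothing about the Lagrange basis $\delta_j$ beyond the given representation of $\varphi$ is used, and no contour integral is needed.
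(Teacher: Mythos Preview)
Your proof is correct and follows essentially the same approach as the paper: both arguments rest on the orthogonality relation $\sum_{m=0}^{n-1}\omega^{m(l-k)}=n\,[\,n\mid l-k\,]$ to split $f_j$ into its residue-class subseries, and both verify holomorphy by examining the resulting power series. The only organizational difference is that the paper first \emph{defines} $f_{j,k}$ via the averaging formula and then derives the power-series form (its Proposition~\ref{Prop2.2} and the proposition following it), whereas you define $f_{j,k}$ by grouping Taylor coefficients and then verify the averaging identity; the underlying computation is identical.
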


In order to prove this, we consider a fixed function $f_j$ and put $g=f_j.$ Then we set 
$$ w^k g_k(w^{n})= \frac{1}{n } \{ g(w)+ e^{-2\pi ik/n}g(e^{2\pi i/n}w) +\dots 
+e^{-2\pi i(n-1)k/n} g(e^{2\pi i(n-1)/n} w)\} 
$$ pointwise.

\begin{proposition} \label{Prop2.2}
Given an arbritary $n\in\mathbb{N}$ and the functions $g_i, \text{ } i=0,\dots ,n-1,$ for all $w \in \mathbb{C}$ we have:
$$
g(w) = g_0(w^n) + w g_1(w^n) + \dots + w^{n-1} g_{n-1}(w^n).
$$
\end{proposition}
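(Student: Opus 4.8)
The plan is to recognize the formula defining the $g_k$ as the standard ``roots of unity filter'' (a finite Fourier transform over $\mathbb{Z}/n\mathbb{Z}$) and to obtain the claimed reconstruction identity by summing over $k$ and invoking orthogonality of the characters $l\mapsto e^{2\pi ikl/n}$.

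First I would fix notation: write $\omega=e^{2\pi i/n}$ and, for $k=0,\dots,n-1$, put
$$
h_k(w):=\frac1n\sum_{l=0}^{n-1}\omega^{-kl}\,g(\omega^l w),
$$
so that the hypothesis is precisely $w^k g_k(w^n)=h_k(w)$. The one point that deserves care is that $g_k$ genuinely is a function of the single variable $u=w^n$, so that the symbol $w^k g_k(w^n)$ makes sense (including at $w=0$). To see this I would use that $g=f_j$ is holomorphic near the disc $|w|\le\rho$, expand $g(w)=\sum_{m\ge0}c_m w^m$, substitute, and interchange the finite sum with the absolutely convergent series to get
$$
h_k(w)=\sum_{m\ge0}c_m w^m\cdot\frac1n\sum_{l=0}^{n-1}\omega^{l(m-k)} .
$$
The inner geometric sum equals $1$ when $m\equiv k\pmod n$ and $0$ otherwise, so $h_k(w)=\sum_{j\ge0}c_{k+jn}w^{k+jn}=w^k g_k(w^n)$ with $g_k(u):=\sum_{j\ge0}c_{k+jn}u^j$. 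This exhibits each $g_k$ as holomorphic on $|u|\le\rho$, which is at the same time the holomorphy asserted in Theorem~\ref{Thm2.1}.

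With this in hand the Proposition is immediate: summing the identities $w^k g_k(w^n)=h_k(w)$ over $k$ and switching the order of summation,
$$
\sum_{k=0}^{n-1}w^k g_k(w^n)=\sum_{k=0}^{n-1}h_k(w)=\frac1n\sum_{l=0}^{n-1}g(\omega^l w)\sum_{k=0}^{n-1}\omega^{-kl},
$$
and since $\sum_{k=0}^{n-1}\omega^{-kl}$ equals $n$ for $l=0$ and $0$ for $l=1,\dots,n-1$, only the $l=0$ term survives and the right-hand side collapses to $g(w)$, which is exactly the asserted formula.

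I do not anticipate a serious obstacle here: the whole argument is the orthogonality relation for $n$-th roots of unity applied twice. The only things to be attentive about are the handling of $w=0$ — one should read $g_k$ off the congruence-class decomposition of the Taylor series of $g$ rather than naively ``dividing by $w^k$'' — and the justification of the term-by-term manipulation of the series, which is routine on any disc contained in the domain of holomorphy of $g$. (An induction on $n$, peeling off $g_0(w^n)=\frac1n\sum_{l}g(\omega^l w)$ and applying the case $n-1$ to what remains, would also work, but the direct computation above is shorter and simultaneously yields the holomorphy of the $g_k$.)
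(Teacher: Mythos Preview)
Your argument is correct and is essentially the same as the paper's: both sum the defining identities $w^k g_k(w^n)=\frac1n\sum_l \omega^{-kl}g(\omega^l w)$ over $k$ and use the orthogonality relation $\sum_{k=0}^{n-1}\omega^{-kl}=n\delta_{l,0}$ to collapse the double sum to $g(w)$, the only difference being that the paper writes the terms out line by line rather than in sigma notation. Your additional verification via the power-series expansion that each $g_k$ is genuinely a holomorphic function of $w^n$ is not needed for Proposition~\ref{Prop2.2} itself (the paper treats the identity pointwise here) but anticipates exactly what the paper proves in the proposition immediately following.
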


\begin{proof}
Using the above formula of $g_k(w^{n}),$ for $k=0,1,\dots ,n-1,$ we have
\begin{eqnarray}
g_0(w^{n}) &=& \frac{1}{n} \{ g(w)+g(e^{2\pi i/n}w)+ g(e^{4\pi i/n}w)+\dots + g(e^{2\pi i (n-1)/n}w)\} \nonumber\\
w g_1(w^{n}) &=& \frac{1}{n} \{ g(w)+e^{-2\pi i/n}g(e^{2\pi i/n}w)+e^{-4\pi i/n}g(e^{4\pi i/n}w) +\dots \nonumber\\
&\text{ }& +e^{-2\pi i (n-1)/n} g(e^{2\pi i (n-1)/n}w)\}\nonumber\\
w^{2} g_2(w^{n}) &=& \frac{1}{n} \{ g(w)+e^{-4\pi i/n}g(e^{2\pi i/n}w)+ e^{-8\pi i/n}g(e^{4\pi i/n}w)+\dots \nonumber\\
&\text{ }& +e^{-4\pi i (n-1)/n} g(e^{2\pi i (n-1)/n}w)\}\nonumber\\
&\dots & \nonumber\\
w^{n-1} g_{n-1}(w^{n}) &=& \frac{1}{n} \{ g(w)+e^{-2\pi i(n-1)/n}g(e^{2\pi i/n}w)+e^{-4\pi i (n-1)/n} g(e^{4\pi i/n}w)\nonumber\\
&\text{ }&+\dots +e^{-2\pi i (n-1)^{2}/n} g(e^{2\pi i (n-1)/n}w)\}\nonumber.
\end{eqnarray}
Summing up all the terms we get
\begin{eqnarray}
&\text{}&\frac{1}{n} n g(w) + \frac{1}{n} g(e^{2\pi i/n}w) \sum_{k=0}^{n-1} e^{-2\pi ik/n} + \frac{1}{n} g(e^{4\pi i/n}w) \sum_{k=0}^{n-1} e^{-4\pi ik/n} + \dots \nonumber \\
&\text{}&  +\frac{1}{n} g(e^{2\pi i(n-2)/n}w)\sum_{k=0}^{n-1} e^{-2\pi i(n-2)k/n}+ \frac{1}{n} g(e^{2\pi i(n-1)/n}w)\sum_{k=0}^{n-1} e^{-2\pi i(n-1)k/n} \nonumber\\
&\text{}&= g(w)\nonumber
\end{eqnarray}
since all the other terms sum up to zero.
\end{proof}

\begin{proof}[Proof of Theorem \ref{Thm2.1}]
It follows now immediately from Proposition \ref{Prop2.2} together with the following proposition.
\end{proof}

\begin{proposition}
If $f_j$ is given for $|p(z)|\leq\rho,$ then $f_{j,k},$ $k=1,\dots, n-1,$ are defined for $|p(z)^{n}|\leq\rho^{n}$ and
\begin{equation}\label{fj}
f_j(p(z))= \sum_{k=0}^{n-1} p(z)^{k} f_{j,k}(p(z)^{n}), \quad \text{for }
|p(z)|\leq \rho. 
\end{equation}
Further, if $f_j(p(z))$ is analytic for $|p(z)|\leq\rho$ then so are $f_{j,k}(p(z)^{n}).$
\end{proposition}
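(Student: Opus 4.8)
The plan is to obtain the decomposition \eqref{fj} as a direct consequence of Proposition \ref{Prop2.2}, and then to address the two analyticity/domain claims separately. First I would specialize Proposition \ref{Prop2.2} to $g = f_j$ with $w = p(z)$: the pointwise identity
$$
g(w) = g_0(w^n) + w\,g_1(w^n) + \dots + w^{n-1} g_{n-1}(w^n)
$$
is exactly \eqref{fj} once we set $f_{j,k} = g_k$, so no new computation is needed there. The content that remains is to justify that the $g_k$ are genuinely well-defined functions of the single variable $u = w^n$, rather than merely of $w$, and that the relevant domain is $|u| \le \rho^n$, i.e.\ $|w| \le \rho$.

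The key observation for the change of variable is the symmetry built into the averaging formula: replacing $w$ by $\omega w$ with $\omega = e^{2\pi i/n}$ permutes the $n$ terms in the defining sum for $w^k g_k(w^n)$ cyclically and multiplies the whole sum by $\omega^{k}$. Hence $w^k g_k(w^n)$ picks up exactly the factor $\omega^k = (\omega w)^k / w^k$ under $w \mapsto \omega w$, which forces $g_k(w^n)$ to be invariant under $w \mapsto \omega w$. A function on (a disc in) $\mathbb{C}$ invariant under multiplication by all $n$-th roots of unity depends only on $w^n$; so $g_k$ is a legitimately single-valued function of $u = w^n$ on $|u| \le \rho^n$. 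I would spell this out by noting that the map $w \mapsto w^n$ is surjective from $\{|w|\le\rho\}$ onto $\{|u|\le\rho^n\}$, and that the invariance just shown makes the value of $g_k$ independent of the chosen $n$-th root; this is the step I expect to require the most care, since one must be sure the formula is consistent across the $n$ preimages and does not secretly depend on a branch choice.

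Finally, for the analyticity claim: if $f_j(p(z)) = g(w)$ is analytic in $w$ for $|w| \le \rho$, then each of the $n$ summands $e^{-2\pi i l k/n} g(e^{2\pi i l/n} w)$ is analytic in $w$ on that disc as a composition of $g$ with the linear map $w \mapsto e^{2\pi i l/n} w$ (which preserves $|w|\le\rho$), so their normalized sum $w^k g_k(w^n)$ is analytic in $w$ on $|w|\le\rho$. Dividing by $w^k$ a priori only gives analyticity on $0 < |w| \le \rho$, so I would remove the apparent singularity at $w=0$ by observing that the sum $w^k g_k(w^n)$ vanishes to order at least $k$ at the origin: indeed, expanding $g$ in its Taylor series $g(w) = \sum_m c_m w^m$ and using $\sum_{l=0}^{n-1} e^{2\pi i l (m-k)/n} = 0$ unless $m \equiv k \pmod n$, the averaging kills every power $w^m$ with $m \not\equiv k$, leaving only powers $w^{k}, w^{k+n}, w^{k+2n}, \dots$. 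Thus $w^k g_k(w^n) = w^k h(w^n)$ with $h$ analytic, $g_k = h$ is analytic on $|u|\le\rho^n$, and \eqref{fj} holds as an identity of analytic functions. The same Taylor-series bookkeeping, incidentally, re-proves the change-of-variable point of the previous paragraph, so I would likely present the argument in that order and let it do double duty.
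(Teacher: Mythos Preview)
Your proposal is correct and follows essentially the same route as the paper: the decomposition is taken from Proposition~\ref{Prop2.2}, and the analyticity of each $f_{j,k}$ is obtained by expanding $f_j$ in a Taylor series, observing that the averaging formula retains exactly the powers $w^m$ with $m\equiv k\pmod n$, and then factoring out $w^k$ to get a convergent power series in $w^n$. Your additional invariance-under-roots-of-unity argument for well-definedness is a nice conceptual gloss, but as you yourself note, the Taylor-series bookkeeping already handles it, and that is precisely how the paper proceeds.
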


\begin{proof}
First part is proved in Proposition \ref{Prop2.2}.

For the second part, we assume $f_j$ analytic, thus it can be written as a power series
\begin{equation}
f_j(p(z))= \sum_{m=0}^{\infty} \alpha_m p(z)^{m}. \label{fj_series}
\end{equation}

We know that pointwise we have
\begin{eqnarray}
 p(z)^k f_{j,k}(p(z)^{n})&=& \frac{1}{n } \left[ f_j(p(z))+ e^{-2\pi ik/n}f_j(e^{2\pi i/n}p(z)) +\dots \right. \nonumber\\
 &\text{ }& \left. +e^{-2\pi i(n-1)k/n} f_j(e^{2\pi i(n-1)/n} p(z))\right]. \label{wkfjk}
\end{eqnarray}

When we substitute (\ref{fj_series}) in (\ref{wkfjk}), we get
\begin{align*}
f_{j,0}(p(z)^{n}) =& \frac{1}{n} \left[ \alpha_0 + \alpha_1 p(z) +\alpha_2 p(z)^{2}+ \dots + \alpha_n p(z)^{n}+ \dots \right. \nonumber\\
&  +\alpha_0 +e^{2\pi i/n} \alpha_1 p(z)+ e^{4\pi i/n} \alpha_2 p(z)^{2} + \dots + \alpha_n p(z)^{n}+\dots \nonumber\\
&+ \dots \nonumber\\
&+\left.  \alpha_0 + e^{2\pi i(n-1)/n} \alpha_1 p(z)+ e^{4\pi i(n-1)/n} \alpha_2 p(z)^{2} + \dots + \alpha_n p(z)^{n}+\dots \right]. \nonumber
\end{align*}

Thus
$ \displaystyle 
 f_{j,0}(p(z)^{n})= \alpha_0 + \alpha_n p(z)^{n}+ \alpha_{2n} p(z)^{2n}+\dots,
$
since all the other terms vanish. 

We continue with $p(z)f_{j,1}(p(z)^{n}),$ so we get 
\begin{align*}
p(z)f_{j,1}(p(z)^{n}) =& \frac{1}{n} \left[ \alpha_0 + \alpha_1 p(z) +\alpha_2 p(z)^{2}+ \dots + \alpha_n p(z)^{n}+ \dots \right. \nonumber\\
&+  e^{-2\pi i/n}\alpha_0 + \alpha_1 p(z)+ e^{2\pi i/n} \alpha_2 p(z)^{2} \nonumber\\
&\quad + \dots + e^{2\pi i(n-1)/n}\alpha_n p(z)^{n}+ \alpha_{n+1} p(z)^{n+1} + \dots \nonumber\\
&+ \dots \nonumber\\
&+  e^{-2\pi i(n-1)/n}\alpha_0 + \alpha_1 p(z)+ e^{2\pi i(n-1)/n} \alpha_2 p(z)^{2} \nonumber\\
&\quad\left. + \dots +e^{2\pi i(n-1)^{2}/n} \alpha_n p(z)^{n}+\alpha_{n+1}p(z)^{n+1}+\dots \right]. \nonumber
\end{align*}

Therefore $\displaystyle
 f_{j,1}(p(z)^{n})= \alpha_1 p(z) + \alpha_{n+1} p(z)^{n+1}+ \alpha_{2n+1} p(z)^{2n+1}+\dots .$

\noindent In a similar way it follows that 
\begin{equation}
p(z)^{k}f_{j,k}(p(z)^{n})= \alpha_k p(z)^{k} + \alpha_{n+k} p(z)^{n+k}+ \alpha_{2n+k} p(z)^{2n+k}+\dots \label{wkfjk_series}
\end{equation}

Because all the coefficients $\alpha_{m_k},$ for $m_k=k,n+k,2n+k,3n+k,\dots ,$ come from $f_j$ which is analytic, we know that 
$ \displaystyle \limsup |\alpha_{m_k}|^{1/m_k}\leq \frac{1}{\rho},$
therefore we have that $p(z)^{k}f_{j,k}(p(z)^{n}) $ is analytic, i.e. a converging power series. 

Now, if we factor (\ref{wkfjk_series})
$$ p(z)^{k}f_{j,k}(p(z)^{n})=  p(z)^{k} \left[ \alpha_k  + \alpha_{n+k} p(z)^{n}+ \alpha_{2n+k} p(z)^{2n}+\dots \right] $$
we have that
$$
f_{j,k}(p(z)^{n})= \alpha_k  + \alpha_{n+k} p(z)^{n}+ \alpha_{2n+k} p(z)^{2n}+\dots $$
is a converging power series, thus is analytic for $|p(z)^{n}|\leq\rho^{n}$.

\end{proof}

\noindent Next we need to be able to bound $\varphi$ in terms of $f_{j,k}$'s and vice versa. The first one is straightforward.

\begin{proposition}\label{Prop2.3}
Denote $L(\rho)= \sup_{|p(z)|\le \rho} \sum_{j=1}^d |\delta_j(z)|.$  Then
$$
\sup_{|p(z)|\leq \rho} |\varphi (z)| \leq L(\rho) \max_{1\le j\le d} \sum_{k=0}^{n-1}  \sup_{|w| \le \rho}  |w^{k}f_{j,k}(w^{n})|.
$$
\end{proposition}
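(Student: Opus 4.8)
The plan is to simply chain together the multicentric representation with the triangle inequality. Start from the expansion
$$
\varphi(z) = \sum_{j=1}^d \delta_j(z)\,\big[f_{j,0}(w^n) + w f_{j,1}(w^n) + \dots + w^{n-1} f_{j,n-1}(w^n)\big], \qquad w = p(z),
$$
which is exactly the conclusion of Theorem \ref{Thm2.1}. Fix a point $z$ with $|p(z)| \le \rho$ and set $w = p(z)$, so that $|w| \le \rho$ as well. First I would apply the triangle inequality over the index $j$, pulling out $|\delta_j(z)|$, to get
$$
|\varphi(z)| \le \sum_{j=1}^d |\delta_j(z)| \cdot \Big| \sum_{k=0}^{n-1} w^k f_{j,k}(w^n) \Big|
\le \sum_{j=1}^d |\delta_j(z)| \sum_{k=0}^{n-1} |w^k f_{j,k}(w^n)|.
$$

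Next I would bound each inner sum by its supremum over the relevant disc: since $|w| \le \rho$,
$$
\sum_{k=0}^{n-1} |w^k f_{j,k}(w^n)| \le \sum_{k=0}^{n-1} \sup_{|\zeta| \le \rho} |\zeta^k f_{j,k}(\zeta^n)| \le \max_{1\le j'\le d} \sum_{k=0}^{n-1} \sup_{|\zeta| \le \rho} |\zeta^k f_{j',k}(\zeta^n)|,
$$
which is a constant independent of $j$ and of $z$. Denote this constant by $M$. Substituting back gives $|\varphi(z)| \le M \sum_{j=1}^d |\delta_j(z)|$. Finally, taking the supremum over all $z$ with $|p(z)| \le \rho$ and recognizing that $\sup_{|p(z)|\le\rho} \sum_{j=1}^d |\delta_j(z)| = L(\rho)$ by definition yields the claimed inequality.

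There is no real obstacle here; the statement is a direct estimate and the only thing to be slightly careful about is that the functions $w^k f_{j,k}(w^n)$ are genuinely defined and bounded on $|w| \le \rho$ (so that the suprema are finite), which is guaranteed by the preceding proposition that establishes analyticity of $f_{j,k}(\cdot)$ on $|w|\le\rho$. One should also note that the representation holds pointwise for every $z$ with $|p(z)|\le\rho$, which is all that is needed since the estimate is pointwise before taking the supremum.
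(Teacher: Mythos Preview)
Your proof is correct and is precisely the straightforward estimate the paper has in mind; the paper does not spell out a proof at all, regarding the inequality as immediate from the representation in Theorem~\ref{Thm2.1} together with the triangle inequality.
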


\noindent The other direction is more involved and we formulate it in the following theorem.

\begin{theorem}\label{Thm2.4}
Assume $p$ is a monic polynomial of degree $d$ with distinct roots and  $s(\rho)$ denotes the distance from  the lemniscate $|p(z)|=\rho$ to the nearest critical point of $p,$ $z_c,$  such that $|p(z_c)|>\rho$ . Then there exists a constant  $C$, depending on $p$ but not on $\rho,$ such that if $\varphi$ is  holomorphic  inside and in  a neighborhood of the lemniscate, then  each $f_{j,k}$ is holomorphic  for $|w| \le \rho$ and for $|w| \le \rho$, $1 \le j \le d$, $0\le k \le n-1,$ we have
\begin{equation}\label{jfa_bound}
\  |w^{k}f_{j,k} (w^n)| \ \le (1+ \frac{C}{s^{d-1}} ) \sup_{|p(z)| \le \rho}|\varphi(z)|.
\end{equation}

\end{theorem}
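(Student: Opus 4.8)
The plan is to recover each $f_j$ from $\varphi$ by a contour-integral formula, estimate that integral, and then extract $f_{j,k}$ from $f_j$ using the averaging identity of Theorem \ref{Thm2.1}. The starting point is the inversion formula (\ref{kaanteiskaava}): for $|w|\le\rho$,
\[
f_j(w) = \sum_{l=1}^d \delta_l(\lambda_j,w)\,\varphi(\zeta_l(w)),
\qquad
\delta_l(\lambda_j,w)=\frac{p(\lambda_j)-w}{p'(\zeta_l(w))(\lambda_j-\zeta_l(w))},
\]
where $\zeta_1(w),\dots,\zeta_d(w)$ are the $d$ preimages of $w$ under $p$. Since $\varphi$ is holomorphic inside and near the lemniscate $|p(z)|=\rho$, each branch $\zeta_l$ is holomorphic in $w$ away from the critical values of $p$, and the only obstruction to holomorphy of $f_j$ on the closed disc $|w|\le\rho$ is a possible critical value $p(z_c)$ with $|p(z_c)|\le\rho$. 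The hypothesis that $s(\rho)>0$ with $|p(z_c)|>\rho$ at the \emph{nearest} critical point is exactly what guarantees all critical values relevant to $|w|\le\rho$ lie strictly outside, so each $f_j$ is holomorphic for $|w|\le\rho$; this is the qualitative half of the claim and follows once the branches are seen to be well separated.

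For the quantitative bound, I would write $f_j$ as a single Cauchy-type integral over the lemniscate. Concretely, for $z$ with $|p(z)|\le\rho$ one has the identity
\[
f_j(p(z)) \;=\; \frac{1}{2\pi i}\int_{|p(\zeta)|=\rho}
\frac{p'(\zeta)\,\bigl(p(\lambda_j)-p(z)\bigr)}{\bigl(p(\zeta)-p(z)\bigr)\bigl(\lambda_j-\zeta\bigr)\,\bigl(\text{a correction making it }=\varphi(z)\text{ at }z\bigr)}\,\varphi(\zeta)\,d\zeta ,
\]
but rather than fuss with the exact kernel I would proceed as follows: start from $\varphi(z)=\sum_j\delta_j(z)f_j(p(z))$, apply $\frac{1}{2\pi i}\int_{|p(\zeta)|=\rho}\frac{p'(\zeta)}{p(\zeta)-w}(\cdot)\,d\zeta$ which reproduces any function holomorphic on $|w|\le\rho$, and use that $\delta_j(\zeta)$ evaluated on the sheet $\zeta_l(w)$ picks out $\delta_l(\lambda_j,w)$ in the residue calculus. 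The outcome is a representation of $w^kf_{j,k}(w^n)$, via the averaging formula of Theorem \ref{Thm2.1}, as an average of $n$ contour integrals of $\varphi$ against kernels of the form $p'(\zeta)/[(p(\zeta)-e^{2\pi i m/n}w)(\lambda_j-\zeta)]$. Taking absolute values inside the integral and bounding $|\varphi|\le \sup_{|p(z)|\le\rho}|\varphi|$ gives (\ref{jfa_bound}) with $C$ an explicit integral of the kernel over the lemniscate.

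The main obstacle, and where the factor $1/s^{d-1}$ enters, is estimating $|\lambda_j-\zeta_l(w)|$ from below uniformly for $|w|\le\rho$: when $w$ is near a critical value the sheets $\zeta_l(w)$ collide, and near such a collision two of the factors $\lambda_j-\zeta_l(w)$ can be comparably small only if $\lambda_j$ itself is near $z_c$; the product $\prod_{l}(\lambda_j-\zeta_l(w)) = \pm(p(\lambda_j)-w)$ is harmless (it is bounded), so the danger is a single small factor. One controls it by noting $p(\zeta)-w=\prod_l(\zeta-\zeta_l(w))$ and that near a critical point of order $r$ one has $|\zeta-z_c|\gtrsim |p(\zeta)-p(z_c)|^{1/(r+1)}$; combining this with $|p(z_c)|-\rho\ge$ (a quantity comparable to $s$, by a Taylor estimate of $p$ at $z_c$) yields $|\lambda_j-\zeta_l(w)|\gtrsim s$ when $\lambda_j$ is the offending root, and the remaining $d-1$ factors are bounded below by a constant depending only on the geometry of the roots of $p$. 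Dividing the kernel bound by $s$ once for the dangerous factor (or, in the worst clustering, absorbing all the $w$-dependence so that the total power is $s^{d-1}$ as stated — this arithmetic I would carry out carefully in the write-up), integrating the length of the lemniscate (which is $O(1)$ in $\rho$ for $\rho$ in a bounded range, and in any case absorbed into $C$), and finally summing the $n$ averaged terms and dividing by $n$ gives the asserted inequality with a constant $C$ depending on $p$ alone. The "$1+$" in front accounts for the contribution of the pole at $\zeta$ on the same sheet as $z$, which reproduces $\varphi$ itself and contributes a term bounded by $1\cdot\sup|\varphi|$.
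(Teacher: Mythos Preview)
Your overall architecture is right: bound $f_j$ in terms of $\varphi$, then pass from $f_j$ to $w^k f_{j,k}(w^n)$ via the averaging formula of Theorem~\ref{Thm2.1}. The paper does exactly this, but in two lines. It simply \emph{quotes} the bound
\[
\sup_{|w|\le\rho}|f_j(w)|\;\le\;\Bigl(1+\tfrac{C}{s^{d-1}}\Bigr)\sup_{|p(z)|\le\rho}|\varphi(z)|
\]
as Lemma~\ref{jfa} (Theorem~1.1 of \cite{b}), and then observes (Lemma~\ref{Lemma2.7}) that the averaging identity for $w^k g_k(w^n)$ is a convex combination of rotates of $g$, so $\sup_{|w|\le\rho}|w^k g_k(w^n)|\le\sup_{|w|\le\rho}|g(w)|$ trivially. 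Chaining the two inequalities gives (\ref{jfa_bound}) immediately.

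What you attempt instead is to \emph{reprove} Lemma~\ref{jfa} from scratch via the inversion formula (\ref{kaanteiskaava}) and a contour integral. That is a reasonable instinct, but as written it is a sketch with real gaps: your displayed kernel contains a literal placeholder ``(a correction making it $=\varphi(z)$ at $z$)'', the passage from $|p(z_c)|-\rho$ to $s$ is asserted rather than proved, and the crucial exponent $d-1$ is acknowledged to be arithmetic you ``would carry out carefully in the write-up''. None of this is needed here: in the context of this paper the $f_j$--bound is an imported black box, and the only new content for Theorem~\ref{Thm2.4} is the one-line averaging estimate. If you want to supply an independent proof of the cited lemma, that is a separate project; for the present theorem the clean route is to invoke it and then take absolute values termwise in the averaging identity.
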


\noindent For the proof of this statement we need some lemmas. The aim is to bound the representing functions $f_{j,k}$ in terms of the original function $\varphi$.  To that end we first quote the basic result of  bounding $f_j$ in terms of $\varphi$ and then proceed bounding $f_{j,k}$ in terms of $f_j$. 

\begin{lemma}(Theorem 1.1 in \cite{b})\label{jfa}
Suppose $\varphi$ is holomorphic in a neighborhood of the set $\{ \zeta :   
|p(\zeta)|\le   \rho \}$  and let $s$ be as in Theorem \ref{Thm2.4}.   There exists a constant $C$,  depending on $p$ but independent of $\rho$ and $\varphi$,  such that 
$$
\sup_{|w| \le \rho}|f_j(w)| \le (1+ \frac{C}{s^{d-1}} ) \sup_{|p(z)| \le \rho}|\varphi(z)|, \qquad \text{for all } j=1,\dots ,d.
$$
\end{lemma}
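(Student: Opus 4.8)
The plan is to derive everything from the inversion formula (\ref{kaanteiskaava}), reducing the statement to a bound on a Lebesgue-type sum that is uniform in $\rho$, and then to read that bound off the factorization of $p'$ over its zeros, the critical points of $p$.

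\emph{Reduction.} Fix $j$ and $w$ with $|w|\le\rho$. Every root $\zeta_l(w)$ of $p(\lambda)-w$ satisfies $|p(\zeta_l(w))|=|w|\le\rho$, so $\zeta_l(w)\in V(p,\rho)$ and $|\varphi(\zeta_l(w))|\le M:=\sup_{|p(z)|\le\rho}|\varphi(z)|$. Then (\ref{kaanteiskaava}) gives at once $|f_j(w)|\le M\,\Lambda_j(w)$ with $\Lambda_j(w):=\sum_{l=1}^{d}|\delta_l(\lambda_j,w)|$, so it suffices to prove $\sup_{|w|\le\rho}\Lambda_j(w)\le 1+C\,s^{-(d-1)}$ for a constant $C=C(p)$. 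I would keep two elementary facts at hand: the $\delta_l(\cdot,w)$ are the Lagrange basis polynomials at $\zeta_1(w),\dots,\zeta_d(w)$, so $\sum_l\delta_l(\lambda_j,w)=1$ identically; and since $\zeta_l(0)=\lambda_l$ one has $\delta_l(\lambda_j,0)=\delta_{jl}$, in particular $\Lambda_j(0)=1$. (That $f_j$ is a bona fide holomorphic function on $|w|\le\rho$ is implicit: in the range where $s$ is meaningful the disc $|w|\le\rho$ contains no critical value of $p$, so the symmetric sum in (\ref{kaanteiskaava}) has no branch point there; alternatively $f_j(w)=\frac{p'(\lambda_j)}{2\pi i}\oint_{|p(\zeta)|=\rho}\frac{\delta_j(\zeta)\varphi(\zeta)}{p(\zeta)-w}\,d\zeta$, and the substitution $\eta=p(\zeta)$ exhibits $f_j$ as $p'(\lambda_j)$ times a Cauchy integral of a function holomorphic near $|\eta|\le\rho$.)

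\emph{The key estimate.} Since $p(\lambda_j)=0$ and $\delta_j(\zeta)=p(\zeta)/[p'(\lambda_j)(\zeta-\lambda_j)]$ one has the identity $\delta_l(\lambda_j,w)=p'(\lambda_j)\,\delta_j(\zeta_l(w))/p'(\zeta_l(w))$, and also $\delta_l(\lambda_j,w)=\bigl(\prod_{m\ne l}(\lambda_j-\zeta_m(w))\bigr)/p'(\zeta_l(w))$. The denominators are where $s$ enters: the $d-1$ zeros of $p'$ are exactly the critical points $z_c$ of $p$, each of which satisfies $|p(z_c)|>\rho$, hence lies at distance $\ge s$ from the lemniscate $|p(z)|=\rho$ and therefore from all of $V(p,\rho)$; since $\zeta_l(w)\in V(p,\rho)$,
\[
|p'(\zeta_l(w))|=d\prod_{z_c}|\zeta_l(w)-z_c|\ \ge\ d\,s^{\,d-1}.
\]
Every remaining quantity is bounded by a constant depending only on $p$: because $s>0$ forces $\rho$ below the least critical value $\rho_0:=\min_{z_c}|p(z_c)|$, the region $V(p,\rho)$ lies inside the fixed compact set $V(p,\rho_0)$, on which $\mathrm{diam}$, $\sup|p'|$ and $\sup|\delta_j|$ are bounded independently of $\rho$.

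\emph{Assembly and the main obstacle.} Then I would write $\Lambda_j(w)\le 1+\sum_l|\delta_l(\lambda_j,w)-\delta_{jl}|$: for $l\ne j$ the term equals $|\delta_l(\lambda_j,w)|\le \mathrm{diam}(V(p,\rho))^{\,d-1}/(d\,s^{d-1})$, and for $l=j$ the term $|\delta_j(\lambda_j,w)-1|=\bigl|\prod_{m\ne j}(\lambda_j-\zeta_m(w))-p'(\zeta_j(w))\bigr|/|p'(\zeta_j(w))|$ is at most $\bigl(\mathrm{diam}(V(p,\rho))^{d-1}+\sup_{V(p,\rho)}|p'|\bigr)/(d\,s^{d-1})$; summing yields $\Lambda_j(w)\le 1+C\,s^{-(d-1)}$, which combined with $|f_j(w)|\le M\Lambda_j(w)$ is the assertion. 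The genuine obstacle is not any individual inequality but the uniformity of $C$ in $\rho$: one must be certain that the only scale that degenerates is $1/|p'|$ near the critical points --- pinned by the factorization above to the single power $s^{-(d-1)}$, with exponent $\deg p'$ --- while the diameters, the suprema of the fixed polynomials $\delta_j$, and the location of the preimages $\zeta_l(w)$ all stay under control once $\rho<\rho_0$. The borderline case to keep in mind is the lemniscate drifting towards a critical point as $\rho\uparrow\rho_0$: then $s\to0$ and the bound correctly blows up, while for small $\rho$ it reduces to a fixed multiple of $M$.
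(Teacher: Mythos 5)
The paper does not actually prove this lemma: it is imported verbatim as Theorem 1.1 of the reference \cite{b}, so any self-contained argument is necessarily ``different from the paper's''. Judged on its own merits, your proof has a genuine gap at its central step. You bound $|f_j(w)|$ termwise by $M\,\Lambda_j(w)$ with $\Lambda_j(w)=\sum_l|\delta_l(\lambda_j,w)|$, and then control $\Lambda_j$ via $|p'(\zeta_l(w))|=d\prod_{z_c}|\zeta_l(w)-z_c|\ge d\,s^{d-1}$, on the grounds that \emph{every} critical point $z_c$ satisfies $|p(z_c)|>\rho$ and hence lies at distance $\ge s$ from $V(p,\rho)$. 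That premise is false, and it is false precisely in the situations this paper needs: $s$ is by definition the distance to the nearest critical point \emph{with} $|p(z_c)|>\rho$, and the text states explicitly (see the discussion after Remark \ref{ConstantC}) that after the perturbation ``one critical point is left outside the lemniscate, while all the others remain inside.'' For the perturbed quartic, for instance, the critical points $\pm\sqrt{\sin 2\varepsilon}$ lie inside $V(p,\rho)$ with critical value $w_c=\cos^2(2\varepsilon)$, $|w_c|<\rho$. Your parenthetical claim that ``in the range where $s$ is meaningful the disc $|w|\le\rho$ contains no critical value of $p$'' is exactly where the hard case gets assumed away.

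The consequence is not a repairable constant but a failure of the method: as $w\to w_c$ with $|w_c|<\rho$, two preimages $\zeta_l(w)$ collide, $p'(\zeta_l(w))\to 0$, and the individual terms $\delta_l(\lambda_j,w)$ blow up, so $\sup_{|w|\le\rho}\Lambda_j(w)=\infty$ and the asserted bound $\Lambda_j\le 1+Cs^{-(d-1)}$ cannot hold. Only the full symmetric sum $f_j$ stays bounded, because the singular parts of the colliding terms cancel --- which is precisely the information a termwise triangle inequality discards. Any correct proof (and the one in \cite{b}) must exploit this cancellation, e.g.\ by first establishing that $f_j$ extends holomorphically across the interior critical values and then invoking the maximum principle or a contour-integral representation such as $f_j(w)=\varphi(\lambda_j)-\frac{w}{2\pi i}\oint\frac{\varphi(\zeta)\,d\zeta}{(p(\zeta)-w)(\lambda_j-\zeta)}$, where the exponent $d-1$ emerges from the behaviour of the contour near the \emph{outside} critical points only. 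Your argument is sound in the special case where every component of $V(p,\rho)$ contains a single root of $p$ (so that all $d-1$ critical points really do lie outside, as for $p(z)=z^2-1$), but that excludes the quartic and higher-degree examples for which the lemma is invoked.
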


\begin{lemma}\label{Lemma2.7}
In  the notation above we have
$$
\sup_{|w| \le \rho} |w^{k}g_k(w^n)| \le   \sup_{|w|\le \rho} |g(w)|.
$$

 \end{lemma}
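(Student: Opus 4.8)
The statement to prove is Lemma \ref{Lemma2.7}: $\sup_{|w|\le\rho}|w^k g_k(w^n)| \le \sup_{|w|\le\rho}|g(w)|$, where the defining formula gives $w^k g_k(w^n)$ as the average
$$
w^k g_k(w^n)=\frac1n\sum_{\ell=0}^{n-1} e^{-2\pi i k\ell/n}\, g\big(e^{2\pi i\ell/n}w\big).
$$
I would prove this by a straightforward triangle-inequality estimate together with the observation that multiplication by a root of unity is an isometry of the disc. First I would fix $w$ with $|w|\le\rho$. Then each point $e^{2\pi i\ell/n}w$ again has modulus $|w|\le\rho$, so $|g(e^{2\pi i\ell/n}w)|\le \sup_{|\zeta|\le\rho}|g(\zeta)| =: M$. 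Since $|e^{-2\pi i k\ell/n}|=1$, the triangle inequality applied to the averaging formula gives
$$
|w^k g_k(w^n)| \le \frac1n\sum_{\ell=0}^{n-1}\big|g(e^{2\pi i\ell/n}w)\big| \le \frac1n\cdot n\cdot M = M.
$$
Taking the supremum over $|w|\le\rho$ on the left yields the claim.

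**Remarks on rigor and subtleties.** There is essentially no obstacle here; the only mild point worth stating carefully is \emph{why} the averaging formula is legitimate on all of $\{|w|\le\rho\}$ and not merely formal. This is precisely the content of the previous proposition (the analyticity of $f_{j,k}$, here $g_k$, on the disc of radius $\rho$, obtained by grouping the Taylor coefficients of $g$ into residue classes mod $n$), so I would simply cite that: $g_k$ is holomorphic on $|w|\le\rho$ and the pointwise identity for $w^k g_k(w^n)$ holds there. One should also note that the bound is uniform and independent of $k$, which is what is needed later when Proposition \ref{Prop2.3} and Lemma \ref{jfa} are combined to prove Theorem \ref{Thm2.4}: chaining Lemma \ref{jfa} (bounding $\sup|f_j|$ by $(1+C/s^{d-1})\sup|\varphi|$) with Lemma \ref{Lemma2.7} applied to $g=f_j$ immediately gives \eqref{jfa_bound}.

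**Where the difficulty (if any) lies.** The genuinely substantive estimate is Lemma \ref{jfa}, which is quoted from \cite{b} and which I would not reprove. Lemma \ref{Lemma2.7} itself is the "easy half" of the two-sided comparison between $\varphi$ and the representing functions $f_{j,k}$: passing from $g$ to the $g_k$'s via a discrete Fourier average over the $n$-th roots of unity can only contract the sup-norm on a disc, because the roots of unity permute the disc and the average of unimodular-weighted values is bounded by the max. So the proof is complete once the triangle inequality is written down; the only thing to be careful about is to phrase it so that the supremum is genuinely over $|w|\le\rho$ (using that each $e^{2\pi i\ell/n}w$ stays in the same closed disc), rather than over some larger set.
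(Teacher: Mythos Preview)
Your proof is correct and is exactly the paper's argument: the paper writes out the averaging formula for $w^k g_k(w^n)$ and says ``the bound follows by taking the absolute values termwise,'' which is precisely your triangle-inequality step using $|e^{2\pi i\ell/n}w|=|w|\le\rho$. Your additional remarks about why the formula holds pointwise and how the lemma feeds into Theorem~\ref{Thm2.4} are accurate but go beyond what the paper spells out.
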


\begin{proof}
We have 
$$ 
w^k g_k(w^{n})= \frac{1}{n } \{ g(w)+ e^{-2\pi ik/n}g(e^{2\pi i/n}w) +\dots 
+e^{-2\pi i(n-1)k/n} g(e^{2\pi i(n-1)/n} w)\}.
$$
The bound follows by taking the absolute values termwise.
  \end{proof}

\begin{proof}[Proof of Theorem \ref{Thm2.4}]
The proof follows immediately from these two lemmas.
\end{proof}

\begin{remark} \label{rem2.5}  Gauss-Lucas theorem asserts that given a polynomial $p$ with complex coefficients, all zeros of $p'$ belong to the convex hull of the set of zeros of $p,$ see \cite{g}. Thus, as soon as $\rho$ is large enough, all the critical points will stay inside the lemniscate whenever the lemniscate is just a single Jordan curve. But when we want to make the separation, we start squeezing the level of the lemniscate and this results in leaving at least one critical point outside. Therefore the $s$ we are measuring is the distance to the boundary from that particular critical point.

If the critical points  of $p$ are simple (as they generically are)  then the  dependency of the distance is inverse proportional and the coefficient in the theorem takes the form
$$
(1+ \frac{C}{s} ).
$$

\end{remark}

To see how one can find these contants $C$ and what they describe we will present an example with the computations of $\delta_l(\lambda_j,w)$ for $p(z)=z^{4}+1.$ These computations will be used in computing the constants. We also apply them for the Riesz projections. 

\begin{example} Let $p(z)=z^4+1$ with roots $\lambda_1=(-1)^{1/4},$ $\lambda_2=(-1)^{3/4},$ $\lambda_3=-(-1)^{1/4}$ and $\lambda_4=-(-1)^{3/4}.$ Let $w=z^4+1.$ Using the formula for $\delta_l(\lambda_j,w)$ we have:
\begin{eqnarray}
\delta_1(\lambda_1,w)&=& 1+\tfrac{3}{8}w+\tfrac{19}{64}w^2+\tfrac{33}{128}w^3+\dots \nonumber\\
\delta_1(\lambda_2,w)&=& -\tfrac{1+i}{8}w-\tfrac{3+4 i}{32}w^2-\tfrac{20+31 i}{256}w^3+\dots \nonumber\\
\delta_1(\lambda_3,w)&=& -\tfrac{1}{8}w-\tfrac{7}{64}w^2-\tfrac{13}{128}w^3+\dots \nonumber\\
\delta_1(\lambda_4,w)&=& -\tfrac{1-i}{8}w-\tfrac{3-4 i}{32}w^2-\tfrac{20-31 i}{256}w^3+\dots \nonumber\\
\delta_2(\lambda_1,w)&=& -\tfrac{1}{8}w-\tfrac{7}{64}w^2-\tfrac{13}{128}w^3+\dots \nonumber\\
\delta_2(\lambda_2,w)&=& -\tfrac{1-i}{8}w-\tfrac{3-4 i}{32}w^2-\tfrac{20-31 i}{256}w^3+\dots \nonumber\\
\delta_2(\lambda_3,w)&=& 1+\tfrac{3}{8}w+\tfrac{19}{64}w^2+\tfrac{33}{128}w^3+\dots \nonumber\\
\delta_2(\lambda_4,w)&=& -\tfrac{1+i}{8}w-\tfrac{3+4 i}{32}w^2-\tfrac{20+31 i}{256}w^3+\dots \nonumber\\
\delta_3(\lambda_1,w)&=& -\tfrac{1-i}{8}w-\tfrac{3-4 i}{32}w^2-\tfrac{20-31 i}{256}w^3+\dots \nonumber\\
\delta_3(\lambda_2,w)&=& 1+\tfrac{3}{8}w+\tfrac{19}{64}w^2+\tfrac{33}{128}w^3+\dots \nonumber\\
\delta_3(\lambda_3,w)&=& -\tfrac{1+i}{8}w-\tfrac{3+4 i}{32}w^2-\tfrac{20+31 i}{256}w^3+\dots \nonumber\\
\delta_3(\lambda_4,w)&=& -\tfrac{1}{8}w-\tfrac{7}{64}w^2-\tfrac{13}{128}w^3+\dots \nonumber\\
\delta_4(\lambda_1,w)&=& -\tfrac{1+i}{8}w-\tfrac{3+4 i}{32}w^2-\tfrac{20+31 i}{256}w^3+\dots \nonumber\\
\delta_4(\lambda_2,w)&=& -\tfrac{1}{8}w-\tfrac{7}{64}w^2-\tfrac{13}{128}w^3+\dots \nonumber\\
\delta_4(\lambda_3,w)&=& -\tfrac{1-i}{8}w-\tfrac{3-4 i}{32}w^2-\tfrac{20-31 i}{256}w^3+\dots \nonumber\\
\delta_4(\lambda_4,w)&=& 1+\tfrac{3}{8}w+\tfrac{19}{64}w^2+\tfrac{33}{128}w^3+\dots \nonumber
\end{eqnarray}

\noindent where $\zeta_l(w),$ for $l=\overline{1,4},$ are given by $\zeta_1(w)=(w-1)^{1/4},$ $\zeta_2(w)=-(w-1)^{1/4},$ $\zeta_3(w)=i(w-1)^{1/4}$ and $\zeta_4(w)=-i(w-1)^{1/4}.$
\end{example}

\begin{remark}  \label{ConstantC}From Lemma \ref{jfa} we see that
\begin{equation}
|\delta_l^{(m)}(\lambda_k,w)|  = {\sim}  \frac{C_m}{s^{m+1}}.\label{constC}
\end{equation}
where $m$ is the multiplicity of the nearest critical point of $p$ outside the lemniscate. %and $z_c$ satisfies $|p(z_c)|>\rho.$
\end{remark}

To be able to compute the constants $C$ from equation (\ref{constC}) for polynomials of degree $d\geq 4,$ we need to separate the spectrum by perturbing the roots with an angle $\varepsilon$ small enough  and by dropping the magnitude $\rho$ below $1.$ The perturbations for polynomials of degree $4,6,8,10,12$ and $14$ are described in the next section. 

Note that before the perturbation  we have multiple critical points, all inside the lemniscate, except for $0,$ which is on the level curve. After we perturb the roots, one critical point is left outside the lemniscate, while all the others remain inside. Therefore, in our case, the multiplicity $m$ is zero.

From now on we will choose a random value for $\varepsilon$ to make some experimental computations. All the computations below will work properly for any other random value  $\varepsilon$ small enough that ensures the desired separation. 

Now, if we choose a random perturbation with, for example, $\varepsilon=\pi/70,$ we get the following values for the constant $C$

\begin{table}[ht]
\caption{The constant C} \vspace{0.25 cm} 
\centering
\begin{tabular}{l | c c c c c c}
\hline\hline 
Degree & 4 & 6 & 8 & 10 & 12 & 14 \\ [0.5ex]
\hline
Constant C & 576.4344  & 1.4665  & 8.0721  & 2.2754 & 12.8520 & 4.0475 \\
\hline
\end{tabular}
\label{table:constantC}
\end{table}

The computations for $\sum |\delta_l(\lambda_k,w)|$ were made with Mathematica, see Appendix \ref{AppendixA}, and the values for $s,$ the smallest distance from the lemniscate to the nearest critical point, were computed with the help of Tiina Vesanen that provided a Matlab program, see Appendix \ref{AppendixB}.
For all these computations one has to choose a value for the level $\rho$, smaller than $1$. If one chooses level $\rho=1,$ then the value for $s$ will be zero, since the lemniscate passes through origin. Therefore we have choosen the minimum value for the level $\rho$ such that the lemniscate separates only in two parts when having a perturbation with $\varepsilon=\pi/70.$ Hence we have registered the following data

\begin{table}[ht]
\centering
\begin{tabular}{c | c c c c c c}
\hline\hline 
Degree & 4 & 6 & 8 & 10 & 12 & 14 \\ [0.5ex]
\hline
 $\sum |\delta_l(\lambda_k,w)|$ & 2293.81  & 6.5122  & 46.6599  & 16.3586 & 83.4547 & 16.7046 \\
\hline
$s$ & 0.2513 & 0.2252 & 0.1730 & 0.1391 & 0.1540 & 0.2423 \\
\hline
\end{tabular}
\end{table}

From the table of constants $C$ one can see that the lemniscate bifurcates differently even with a small $\varepsilon.$

\begin{remark} \label{Deg2_C} For the quadratic polynomial $p(z)=z^{2}-1,$ one does not need to perturb the roots, but just to decrease the magnitude of $\rho$ below $1.$ In this case, for example, if the level is $\rho= 0.9$ then one gets $C=0.2,$ while if the level is $\rho = 0.99$ then $C=0.6956 .$
\end{remark}

%%%%%%%%%%%%%%%%%%%%%%%%%%%%%%%%%%%%%%%%%%%%%%%%%%%%%%%%%%%%%%%%%%%%%%%%%%%%%%%%%%%%%%%%%%%%%%%%%
\subsection{Application to Riesz projection}

Let $A$ be a bounded operator in a Hilbert space such that
$$
V_{p^n}(A)=\{z \ :  \  |p(z)^{n}| \le \|p(A)^n\| \}.
$$
In order to compute the Riesz projection we take $\varphi=1$ in one of the components and $\varphi=0$ in the other components of $V_{p^{n}}(A)$
and 

\begin{equation}
f_j(p(z))=\sum_{k=0}^{n-1} p(z)^{k} f_{j,k}(p(z)^{n}). \nonumber
\end{equation} 

We shall apply the following theorem, if $\varphi$ is holomorphic in a neighbourhood of the unit disk $\mathbb{D}$ and $A\in \mathcal{B}(H),$ then
\begin{equation}
\|\varphi (A)\| \leq \sup_{\mathbb{D}} |\varphi|, \label{star}
\end{equation} 
see e.g. \cite{h}.

From Theorem \ref{Thm2.4} we have for $|p(z)|\leq\rho$  that
\begin{equation}
|p(z)^{k}f_{j,k}(p(z)^{n})|\leq (1+\frac{C}{s^{d-1}})\sup_{|p(z)|\leq \rho} |\varphi(z)|, \label{jfk_bound}
\end{equation}

\noindent and since $f_{j,k}$ are analytic, we can apply (\ref{star}) to each of them, so with $\rho=\|p(A)^{n}\|^{1/n}$
\begin{equation}
\| f_{j,k}(p(A)^{n})\| \leq \sup_{|p(z)|\leq \rho} |f_{j,k}(p(z)^{n})|. \label{vonN}
\end{equation}

We have,
$$ \| p(A)^{k} f_{j,k} (p(A)^{n}) \| \leq  \| p(A)^{k}\| \| f_{j,k} (p(A)^{n})\| $$
and from (\ref{vonN}) we get
$$ \| p(A)^{k} f_{j,k} (p(A)^{n}) \| \leq \| p(A)^{k}\| \sup_{|p(z)|\leq \rho} |f_{j,k}(p(z)^{n})| .$$

By the Maximum principle we see that
$$ \| p(A)^{k}\| \sup_{|p(z)|\leq \rho} |f_{j,k}(p(z)^{n})| =  \| p(A)^{k}\| \rho^{-k} \sup_{|p(z)|\leq \rho} | p(z)^{k} f_{j,k}(p(z)^{n})|. $$

Therefore, by (\ref{jfk_bound}), we have
$$
 \| p(A)^{k} f_{j,k} (p(A)^{n}) \| \leq \| (p(A)/\rho)^{k} \| \left( 1+\frac{C}{s^{d-1}}\right) \sup_{|p(z)|\leq \rho} |\varphi(z)|. 
$$

Substituting now in the decomposition (\ref{phiA}), $\varphi(A)$ becomes the Riesz projection and it is bounded by
\begin{equation}\label{RieszBound}
\| \varphi(A) \| \leq \left[ \left( 1+\frac{C}{s^{d-1}}\right) \frac{\| p(A)^{k} \|}{\rho^{k}} \sum_{j=1}^{d} \| \delta_j(A)\| \right] \sup_{|p(z)|\leq \rho} |\varphi(z)|. 
\end{equation}

Thus we have proven the following theorem.

\begin{theorem}\label{ThmRiesz}
Given a polynomial $p$ of degree $d,$ with distinct roots and a bounded operator $A$ in a Hilbert space, we asume that the "expression"
$$V_{p^{n}}(A)=\{ z \ :  \  |p(z)^{n}| \le \|p(A)^n\| \} $$
has at least two components. Set $\rho=\|p(A)^{n}\|^{1/n}$ and let $\varphi$ be a function such that $\varphi=1$ in one component of $V_{p^{n}}(A)$ and $\varphi=0$ in the others. Let $s$ be the distance from the nearest outside critical point to the boundary of $V_{p^{n}}(A)$.

Then, considering $f_{j,k}$ as given by Theorem \ref{Thm2.1}, one has
$$ \varphi(A) = \sum_{j=1}^{d}\delta_j(A)\sum_{k=0}^{n-1}p(A)^{k}f_{j,k}(p(A)^{n}),$$
which is the Riesz spectral projection onto the invariant subspace corresponding to the spectrum inside the component where $\varphi=1.$

The bound for the norm of $\varphi(A)$ is given by (\ref{RieszBound}).
 
\end{theorem}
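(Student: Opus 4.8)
The plan is to assemble the theorem from three ingredients that are already in hand: the root-of-unity decomposition of Theorem~\ref{Thm2.1}, the transfer estimate of Theorem~\ref{Thm2.4}, and the von Neumann-type inequality~(\ref{star}). All of the analytic substance sits in those results, so what remains is essentially bookkeeping, carried out in the following steps.

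\emph{Step 1 (the representation formula).} The condition $|p(z)^n|\le\|p(A)^n\|$ defining $V_{p^n}(A)$ is the same as $|p(z)|\le\rho$ with $\rho=\|p(A)^n\|^{1/n}$, so $V_{p^n}(A)=V(p,\rho)$, and by the spectral mapping theorem $\sigma(A)\subset V(p,\rho)$. Since $\varphi$ equals $1$ on one component of $V(p,\rho)$ and $0$ on the others, it is holomorphic in a neighbourhood of $V(p,\rho)$ and satisfies $\varphi^2=\varphi$ there. Substituting the multicentric representation $\varphi(z)=\sum_{j=1}^d\delta_j(z)f_j(p(z))$ into Theorem~\ref{Thm2.1} gives $\varphi(z)=\sum_{j=1}^d\delta_j(z)\sum_{k=0}^{n-1}p(z)^k f_{j,k}(p(z)^n)$, with each $f_{j,k}$ holomorphic on a neighbourhood of $|w|\le\rho$. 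Applying the holomorphic functional calculus of $A$ to this scalar identity (legitimate, since every function occurring is holomorphic near $\sigma(A)$ and the calculus is a continuous algebra homomorphism) produces $\varphi(A)=\sum_{j=1}^d\delta_j(A)\sum_{k=0}^{n-1}p(A)^k f_{j,k}(p(A)^n)$. Finally, because $\varphi^2=\varphi$ near $\sigma(A)$ the operator $\varphi(A)$ is idempotent, and $\varphi(A)=\frac{1}{2\pi i}\int_\gamma(\lambda I-A)^{-1}\,d\lambda$ with $\gamma$ separating the portion of $\sigma(A)$ in the $\{\varphi=1\}$ component from the rest; hence it is the Riesz projection onto the corresponding invariant subspace.

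\emph{Step 2 (the norm bound).} I would estimate each summand $p(A)^k f_{j,k}(p(A)^n)$ separately. Submultiplicativity gives $\|p(A)^k f_{j,k}(p(A)^n)\|\le\|p(A)^k\|\,\|f_{j,k}(p(A)^n)\|$. Since $f_{j,k}$ is holomorphic on a neighbourhood of $|w|\le\rho^n=\|p(A)^n\|$ (the analyticity part of Theorem~\ref{Thm2.1} together with the choice $\rho=\|p(A)^n\|^{1/n}$), the operator $p(A)^n/\rho^n$ is a contraction, and (\ref{star}) applied to $\zeta\mapsto f_{j,k}(\rho^n\zeta)$ yields $\|f_{j,k}(p(A)^n)\|\le\sup_{|w|\le\rho}|f_{j,k}(w^n)|$. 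By the maximum principle the holomorphic function $w\mapsto w^k f_{j,k}(w^n)$ attains its maximum modulus over $|w|\le\rho$ on the circle $|w|=\rho$, so $\sup_{|w|\le\rho}|f_{j,k}(w^n)|=\rho^{-k}\sup_{|w|\le\rho}|w^k f_{j,k}(w^n)|$, and Theorem~\ref{Thm2.4} bounds the right-hand side by $(1+\frac{C}{s^{d-1}})\sup_{|p(z)|\le\rho}|\varphi(z)|$. Combining, $\|p(A)^k f_{j,k}(p(A)^n)\|\le\frac{\|p(A)^k\|}{\rho^k}(1+\frac{C}{s^{d-1}})\sup_{|p(z)|\le\rho}|\varphi(z)|$, and collecting these estimates over $j$ (weighted by $\|\delta_j(A)\|$) and over $k$ in the formula of Step~1 gives the bound~(\ref{RieszBound}).

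\emph{Expected obstacle.} The genuinely hard analysis --- holomorphy of the $f_{j,k}$, their $s^{-(d-1)}$ bound in terms of $\varphi$, and the decomposition into $n$ rotated copies --- is imported wholesale from Theorems~\ref{Thm2.1} and~\ref{Thm2.4}, so nothing new is needed there. The only point demanding real care is the scaling: one must verify that the single number $\rho=\|p(A)^n\|^{1/n}$ plays all three roles consistently, namely the lemniscate level in Theorem~\ref{Thm2.4}, the radius of the disc on which each $f_{j,k}$ is holomorphic, and the normalisation making $p(A)^n/\rho^n$ a contraction for~(\ref{star}); and that the maximum-principle identity $\sup_{|w|\le\rho}|f_{j,k}(w^n)|=\rho^{-k}\sup_{|w|\le\rho}|w^k f_{j,k}(w^n)|$ is invoked in the correct direction. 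Once this is pinned down, the proof is a short chain of inequalities.
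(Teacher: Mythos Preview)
Your proposal is correct and follows essentially the same route as the paper: the representation comes from Theorem~\ref{Thm2.1} applied inside the holomorphic functional calculus, and the norm bound is obtained by chaining submultiplicativity, the von Neumann inequality~(\ref{star}) applied to the contraction $p(A)^n/\rho^n$, the maximum-principle identity, and the transfer estimate of Theorem~\ref{Thm2.4}. Your write-up is in fact slightly more careful than the paper's in two places --- you make the contraction hypothesis in~(\ref{star}) explicit, and you note that the estimates must be summed over $k$ as well as $j$ (the displayed bound~(\ref{RieszBound}) in the paper leaves $k$ as a free index, which is presumably a slip for a sum or a maximum over $0\le k\le n-1$).
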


\begin{remark}\label{Rem2.14}
In applications we will consider $\varphi=1$ in the components of $V_{p^{n}}(A)$ that are on the right complex half-plane and $\varphi=-1$ in the components on the left complex half-plane. This way the computations are more symmetrical. In this situation there is only one critical point at the origin, which is simple, so we will have $\displaystyle \left(1+\frac{C}{s}\right)$ in the formula for the bound of the Riesz projection.

\end{remark}

%%%%%%%%%%%%%%%%%%%%%%%%%%%%%%%%%%%%%%%%%%%%%%%%%%%%%%%%%%%%%%%%%%%%%%%%%%%%%%%%%%%%%%%%%%%%%%%%%
%%%%%%%%%%%%%%%%%%%%%%%%%%%%%%%%%%%%%%%%%%%%%%%%%%%%%%%%%%%%%%%%%%%%%%%%%%%%%%%%%%%%%%%%%%%%%%%%%
\section{Separating polynomials}

\subsection{Separation tasks}

In this section we shall discuss separting issues by lemniscates.  To that end,  given a polynomial
$p$
denote by $V(p, \rho)$ the set
$$
V(p, \rho)= \{z \in \mathbb C \ : |p(z)| \le \rho \}. 
$$
A key result in this context is the following.  Let $K\subset \mathbb C$ be compact and  such that $\mathbb C \setminus K$ is connected. For $\delta>0$ denote further
$$
K(\delta) = \{z \ : \ \text{dist}( z, K) < \delta\}.
$$
Then there exists a polynomial $p$ and $\rho >0$ such that
$$ 
K \subset V(p,\rho) \subset K(\delta).
$$
In particular, if $K=K_1 \cup K_2$ and $K_1 \cap K_2= \emptyset$ since $K$ is compact, then  for small enough $\delta $
$$
K_1(\delta) \cap K_2(\delta)=\emptyset
$$
as well.  Thus, $V(p,\rho)$  separates the components $K_1$ and $K_2$ respectively.
Suppose  that we have two analytic functions $\varphi_j$, each analytic in $K_j(\delta)$.
We  can view them as just one analytic function 
$$\varphi:   K(\delta) \rightarrow \mathbb C ,
$$
 where $\varphi$ agrees to $\varphi_j$ on $K_j(\delta)$.   We are interested in particular in the case where $\varphi_j$ is constant.  Multicentric  representation  then gives a power series which is simultaneously valid in both components.
 
So, we can ask, for such a separtion task  what is the smallest degree of a polynomial achieving this.  

We model this as follows:
Let 
$$
K_1(\delta) = \{z=1 +iy \ : |y| \le \tan(\delta)  \}
$$
and $K_2(\delta)$ symmetrically on the other side of the imaginary axis:
$$
K_2(\delta) = \{z=-1 +iy \ : |y| \le \tan(\delta)  \}
$$

Our first problem concerns the minimal degree of a polynomial $p$ such that
$$
K_j(\delta) \subset V(p, \rho)
$$ 
and 
$$
V(p,\rho) \cap i\mathbb R = \emptyset.
$$
This is discussed in the next  subsection.

Another natural task is  related to existence of a logarithm.  So, $C$ is again compact and we assume $0 \notin C$.   Now there exists a single valued logarithm in $C$ if and only if $0$ is in the unbounded component of the complement of $C$. That is,  the set $C$ does not separate  origin from infinity. The natural task here is to find a polynomial $p$ such that
$$
C \subset V(p,\rho)
$$
and
$$
0\notin V(p,\rho).
$$
As $V(p,\rho)$ is polynomially convex, this suffices for  representing the logarithm in $V(p,\rho)$. 

%%%%%%%%%%%%%%%%%%%%%%%%%%%%%%%%%%%%%%%%%%%%%%%%%%%%%%%%%%%%%%%%%%%%%%%%%%%%%%%%%%%%%%%%%%%%%%%%
\subsection{ Model problem}

Let $p(z)=z^d-1$ be a monic polynomial of complex variable $z,$ with $|p(z)|=1.$ The polynomial $p$ can be written as
$$ p(z)=\prod_{j=1}^{d}(z-e^{i \theta_j}) $$
where $e^{i\theta_j}$ are the roots of $p$ and $\theta_j$ are the angles of the roots. For $d=4m,$ $m\geq 1$ we have

$$ p(z)=\prod_{j=1}^{m} (z-e^{i \theta_j})\prod_{j=1}^{m} (z-e^{-i \theta_j})\prod_{j=1}^{m} (z+e^{i \theta_j})\prod_{j=1}^{m} (z+e^{-i \theta_j}). $$

We are interested to separate the spectrum of $p,$ as discussed earlier in this paper. In this sense, we first check if there are roots laying on the imaginary axis. If so, a rotation with angle $\pi /d$ is applied so that no roots touch $i\mathbb{R}.$ 

Next we perturb the roots as follows: the four roots that are closest to the imaginary axis (complex roots) are moved along the unit circle towards the real axis with small angle $\varepsilon.$ Then the level $\rho$ is slightly decreased with $\eta.$ This approach is used for polynomials of degree $d\geq 4.$ The quadratic polynomial case is shortly discussed below and cubic polynomial case is presented as a first example.

To this end we have to find the maximum $\eta$ for a chosen $\varepsilon$ so that the spectrum gets separated in only two parts, one on the right hand side of the imaginary axis and one on the left hand side. Also we can find the maximum angle $\alpha$ (see Figure \ref{Fig1}) such that the spectrum will lay inside our lemniscate. 
\newpage
\begin{figure}[h]
\centering
\includegraphics[width=0.68\textwidth]{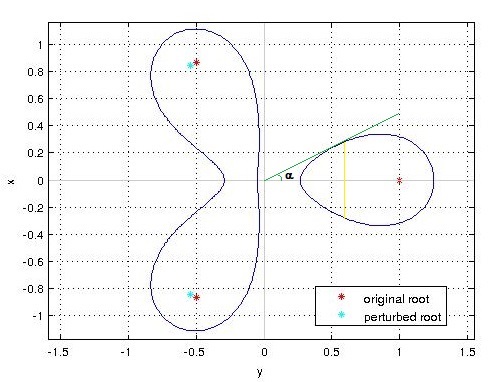}
\caption{Level 0.996}
\label{Fig1}
\end{figure}

 Now we analyze some cases. The quadratic polynomial, $p(z)=z^{2}-1$ is the classical lemniscate and for this one just has to decrease the magnitude $\rho$ to below 1. No perturbation of the roots is needed. For a decrease with $\eta=0.01 $ of the level we get the following picture:
\begin{figure}[h]
\centering
\includegraphics[width=0.55\textwidth]{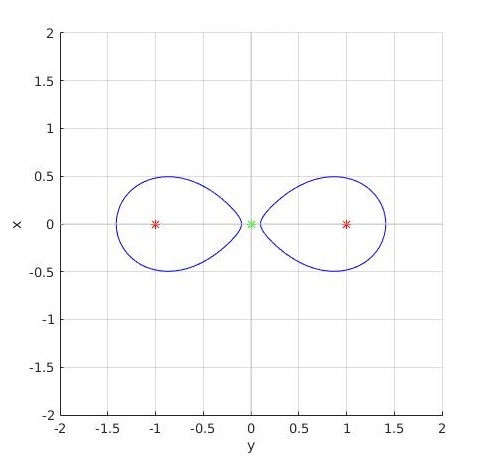}
\caption{Level 0.99}
\label{fig:deg3}
\end{figure}

\begin{example}
Let $p(z)=z^{3}-1$ with roots $e^{2\pi i/3},$ $e^{-2\pi i/3}$ and $1.$ We apply a perturbation with $\varepsilon$ to the complex roots and we get
$$ p_\varepsilon(z)=(z-e^{i(2\pi /3+\varepsilon)})(z-e^{-i(2\pi /3+\varepsilon}))(z-1) .$$
Thus the lemniscate is $|p_\varepsilon(z)|=1-\eta.$ For $\varepsilon=\pi/70$ the resulted picture is shown in Figure \ref{figure3}.
\newpage
\begin{figure}[hb]
    \centering
    \begin{subfigure}[b]{0.49\textwidth}
        \includegraphics[width=\textwidth]{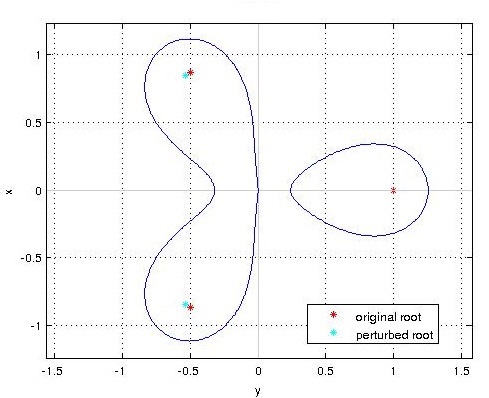}
        \caption{Level 1}
        \label{fig3a}
    \end{subfigure}
    \begin{subfigure}[b]{0.49\textwidth}
        \includegraphics[width=\textwidth]{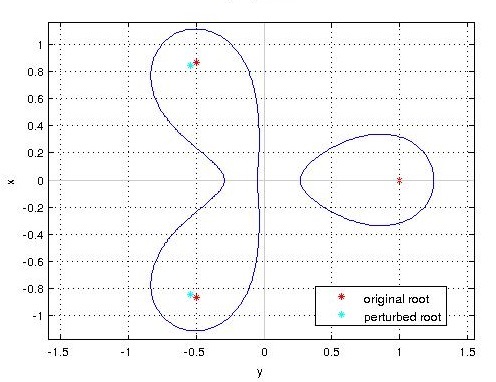}
        \caption{Level 0.996}
        \label{fig3b}
    \end{subfigure}
    \caption{Separation of the cubic polynomial}\label{figure3}
\end{figure}
\end{example} 

%%%%%%%%%%%%%%%%%%%%%%%%%%%%%%% picture 1 :degree 3

\begin{example}
Let $p(z)=z^{4}-1.$ Since there are roots on the imaginary axis, we have to apply a rotation with $\pi/4.$ Thus our polynomial becomes $p(z)=z^{4}+1.$ 

Then
$$ p(z)=(z-e^{i \theta})(z-e^{-i\theta})(z+e^{i\theta})(z+e^{-i\theta})$$
where $\theta= \pi/4,$ so we have
\begin{equation}
p(z)= (z^{2}-e^{2i\theta})(z^{2}-e^{-2i\theta})=z^{4}-2\cos (2\theta)z^{2}+1. \label{eq3.2}
\end{equation} 
Next, we decrease the angle $\theta$ with $\varepsilon$ small enough and denote the new angle $\theta_\varepsilon=\theta-\varepsilon.$ For the polynomial with perturbed $\theta$ we compute the lemniscate.

We have that 
\begin{eqnarray}
e^{2i\theta_\varepsilon} &=& e^{2i\theta-2i\varepsilon}=e^{i\pi/2-2i\varepsilon} \nonumber\\
&=& i(1-2i\varepsilon -2\varepsilon^{2}+\dots)=2\varepsilon +i(1-2\varepsilon^{2}+\dots). \nonumber
\end{eqnarray}
For $z=t\in \mathbb{R}$
\begin{eqnarray}
|p_{\theta_\varepsilon}(t)| &=& |t^{2}-2\varepsilon +i(-1+2\varepsilon^{2}+\dots)|^{2} \nonumber\\
&=& t^{4}-4t^{2}\varepsilon +1+ o(\varepsilon^{2}). \label{eq3.3}
\end{eqnarray}
For $z=it\in \mathbb{C}$
\begin{eqnarray}
|p_{\theta_\varepsilon}(it)| &=& |(it)^{2}-2\varepsilon +i(-1+2\varepsilon^{2}+\dots)|^{2} \nonumber\\
&=& t^{4}-4t^{2}\varepsilon +1+ o(\varepsilon^{2}). \nonumber
\end{eqnarray}
We write $t=z=x+iy$ in (\ref{eq3.3}) and compute the lemniscate $l: |p_{\varepsilon}(z)|=1-\eta,$ where $p_{\varepsilon}(z)=z^{4}-2\cos (2\theta_\varepsilon)z^{2}+1$ from (\ref{eq3.2}). Thus 
\begin{align*}
l : &\text{ } (x^{2}+y^{2})^{4}+2(x^{2}+y^{2})^{2}-16x^{2}y^{2}+16\varepsilon^{2}(x^{2}+y^{2})^{2} \nonumber\\
 & -8\varepsilon (x^{6}+x^{4}y^{2}-x^{2}y^{4}-y^{6}+x^{2}-y^{2})=0. \label{eq3.4}
\end{align*}

For $\varepsilon=\pi/70$ the results are shown in Figure \ref{figure4}:
\newpage
\begin{figure}[hb]
    \centering
    \begin{subfigure}[b]{0.49\textwidth}
        \includegraphics[width=\textwidth]{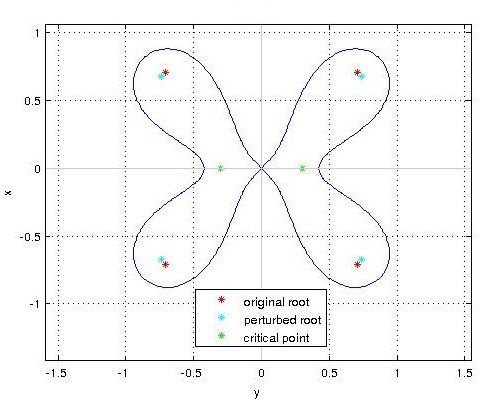}
        \caption{Level 1}
       % \label{fig4a}
       \end{subfigure}
    \begin{subfigure}[b]{0.49\textwidth}
        \includegraphics[width=\textwidth]{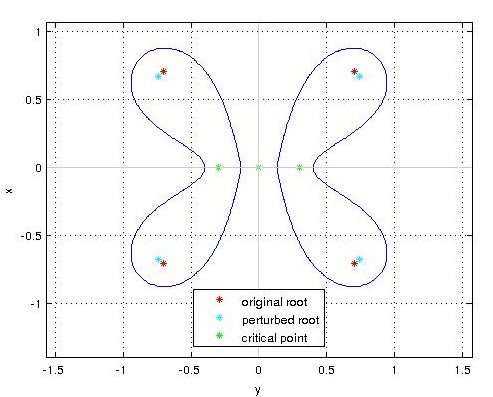}
        \caption{Level 0.997}
        %\label{fig4b}
    \end{subfigure}
    \caption{Separation of the quartic polynomial}
    \label{figure4}
\end{figure}
\end{example}
%%%%%%%%%%%%%%%%%%%%%%%%%%%%%% picture 2: degree 4

Similar computations were made for polynomials of degree $6,8,10,12$ and $14,$ and these can be seen in Appendix \ref{AppendixD}.

\begin{remark} The goal was to find the maximum angle $\alpha$ such that the spectrum lays inside the lemniscate. For this, one can compute the ratio $a/b,$ where $a$ and $b$ are the lenght of the lines $a$ and $b$ from the picture below, and hence the angle $\alpha$ is 
\begin{equation}
\alpha=\arctan (a/b). \nonumber
\end{equation}
%\newpage
\begin{figure}[hb]
  \centering
    \includegraphics[width=0.6\textwidth]{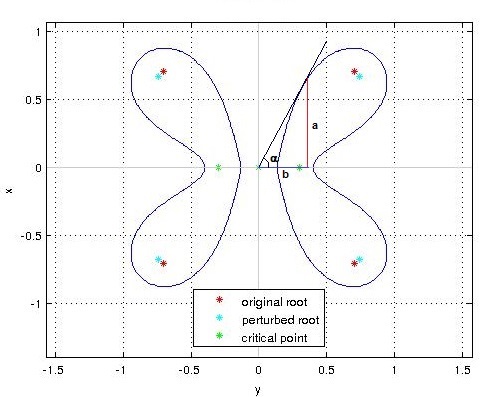}
  \caption{Ratio and angle $\alpha$}
  \label{Ratio}
\end{figure}

Note that, for a minimum level one might have that the line $a$ cuts the lemniscate, situation that happens even for a slightly perturbation of the level in all the cases with polynomials of degree $d\geq 6.$ Therefore, one has to consider a smaller angle.

For a minimum level $\rho$ of the lemniscate and a perturbation with $\varepsilon=\pi/70$ we have the following values for the ratio:
\newpage
\begin{table}[ht]
\caption{Ratio} \vspace{0.1 cm} 
\centering
\begin{tabular}{c| c c c c c c}
\hline\hline 
\text{ } & Degree 4 & Degree 6 & Degree 8 & Degree 10 & Degree 12 & Degree 14 \\ [0.3ex]
\hline
\textbf{a} & 0.5637 & 0.9040 & 0.9905 & 1.0043 & 0.9973 & 0.9846 \\
\textbf{b} & 0.3090 & 0.3767 & 0.3790 & 0.3624 & 0.3402 & 0.3176 \\
\textbf{a/b} & 1.8242 & 2.3997 & 2.6134 & 2.7712 & 2.9315 & 3.1001 \\
\hline
\end{tabular}
\label{table:ratio_MinLength}
\end{table}

\begin{figure}[h]
	\centering
		\includegraphics[width=0.70\textwidth]{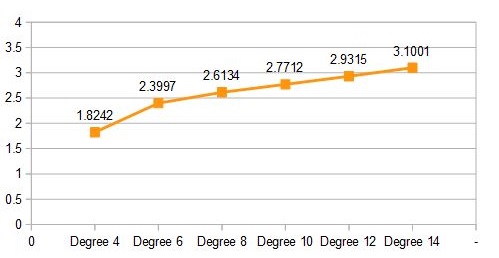}
	\caption{Ratio chart}
	\label{ratio_chart}
\end{figure}
%\newpage
\noindent The maximum angle $\alpha$ that we have found is presented in the following chart,
%\newpage
\begin{figure}[hb]
	\centering
		\includegraphics[width=0.70\textwidth]{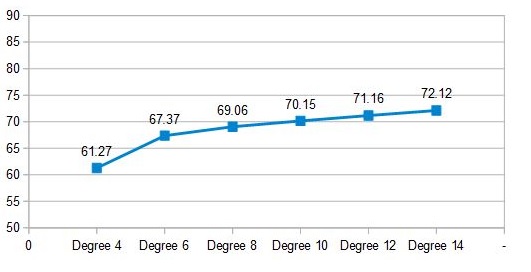}
	\caption{Maximum angle $\alpha$}
	\label{maxAngle}
\end{figure}
\end{remark}

\begin{remark} The quadratic polynomial is a special case since the only perturbation applied is decreasing the level. In this case, the maximum angle is when the level $\rho$ is unchanged, in this case $\alpha=45^{\circ}$ and the minimum angle goes to $0^{\circ}$ when the level is significantly decreased with $\eta=0,99.$ For a slightly change with $\eta=0.01$ we have found an angle of $29.92^{\circ}$ and for a change with $\eta=0.1$ we have registered an angle of $26.57^{\circ}.$
\end{remark}
\begin{remark}
For polynomials of degree $d\geq 4$, it is easy to check the maximum value of $\eta,$ i.e. the minimum value that the level can have, such that we get the desired separation. For example, if again the perturbation is $\varepsilon=\pi/70$ we get the following values: 
\newpage
\begin{table}[ht]
\centering 
\begin{tabular}{r| c c c c c c}
\hline\hline 
Degree  & 4 & 6 & 8 & 10 & 12 & 14 \\ 
\hline
$\eta$  & 0.008 & 0.0078 & 0.0038 & 0.0021 & 0.0022 & 0.0045 \\
\hline
\end{tabular}
\label{table:etaMax}
\end{table}

With these values the lemniscate squeezes next to the closest critical point to the perturbed root. A bigger decrease of the level would force that critical point to get out from the interior of the lemniscate and thus one doesn't get the desired separation. These estimates may not be the best but they are what we have reached by manipulating the pictures and the resulting pictures can be seen in Appendix \ref{AppendixE}.
\end{remark}

%%%%%%%%%%%%%%%%%%%%%%%%%%%%%%%%%%%%%%%%%%%%%%%%%%%%%%%%%%%%%%%%%%%%%%%%%%%%%%%%%%%%%%%%%%%%%%%%%
%%%%%%%%%%%%%%%%%%%%%%%%%%%%%%%%%%%%%%%%%%%%%%%%%%%%%%%%%%%%%%%%%%%%%%%%%%%%%%%%%%%%%%%%%%%%%%%%%
\section{Concluding remarks}

\hspace{0.7cm} In section $2$ we presented the closed formula and the bound for the Riesz projection, in Section $3$ we described the separation process and in this section one can see how the expansions on the Riesz projection look like when taking $\varphi=1$ for the components on the right side of the imaginary axis and $\varphi=-1$ for the ones on the left side. We will finish this paper with an example that shows what the effects are on the Riesz projection.

%\subsection{Expansions of the Riesz projection}
In the appendices of the technical report version of this paper one can find applications that explicitly compute the series expansions for $f_j$'s in the decomposition
$$
\varphi(z)= \sum_{j=1}^d \delta_j(z) f_j(w), \quad w=p(z),
$$
when $\varphi $ is identically $1$ on the right half plane and $-1$ on the left half plane, for the quadratic, the quartic, the perturbed quartic polynomial and the polynomial $q(z)=p(z)^{n}=(z^{2}-1)^{n}$, respectively.

\begin{remark}
In using multicentric calculus a central problem is to find a polynomial $p$ such that $p(A)$ has small norm and, when aiming for Riesz projection, that the lemniscate on the level the of $\| p(A)\| $ separates the spectrum into different components.
This can be done, for example, by minimizing $\| p(A) \|$ approximatively over a set of polynomials, or, by using a suitable $p$ which has been computed for a neigbouring matrix.

Alternatively, and that is the main topic here, one search for polynomials $p$ such that it is small in a neighbourhood of the spectrum of $A.$ And then computes heigh enough power $p(A)^{2^{m}}$ such that $\|p(A)^{2^{m}} \|^{1/2^{m}} \sim \rho (p(A)).$ 
\end{remark}

In the following example we point out with the help of a low-dimensional problem, how the size of coupling can affect on the need of taking a high power of $p(A).$

\begin{example}
\begin{equation} 
 A=\begin{pmatrix}
B &  X \\
0 &  -B   
\end{pmatrix}
\end{equation}
be a $4 \times 4$ matrix where 
\begin{equation}
B=\begin{pmatrix}
\alpha & 1 \\
0 & \alpha
\end{pmatrix}
\end{equation}
and
\begin{equation}
X=\begin{pmatrix}
0 & \gamma \\
\gamma & 0
\end{pmatrix}.
\end{equation}
In this example we could take $p(z)=z^2-\alpha^2$ to actually get a closed form  for the projection. However,   we take $p(z)=z^2-1$ as our polynomial and then the effect of $\alpha>0$ being close or further away from 1 models the lack of exact knowledge on the spectrum.   We are interested in having $\|p(A)^n\| <1$ and ask how the parameters $\alpha$ and $\gamma$  contribute to the value of $n$ needed.
Qualitatively it is clear that such an $n$ exists if and only if $ \alpha < \sqrt 2$, independently of $\gamma$.

Substituting $A$ into $p$ we have
$$
p(A)= \begin{pmatrix}
C & Y\\
0& C
\end{pmatrix},
$$
where
$$
C=\begin{pmatrix}
\alpha^2-1 & 2\alpha\\
0& \alpha^2 -1
\end{pmatrix}
$$
and 
$$
Y=\begin{pmatrix}
\gamma & 0 \\
0 & - \gamma
\end{pmatrix}.
$$
A short computation shows that
$$
p(A)^n = \begin{pmatrix}
C^n &    n(\alpha^2-1)^{n-1} Y\\
0& C^n
\end{pmatrix}.
$$
 Thus,  we have
 $$
 \|p(A)^n\|  \sim |\alpha^2-1|^{n-1} \Big[ |\alpha^2-1| + n(|\alpha| + |\gamma|) \Big],
 $$
so that if $| \alpha ^2 - 1| << 1$ then a small $n$ shall work. If however, $|\alpha^2-1|= 1- \varepsilon$ with $0<\varepsilon <<1$, modelling the case when e.g. spectrum  of $A$ is scattered inside the lemniscate, then the behavior is of the nature
$$
\|p(A)^n\| \sim (1-\varepsilon)^n(n+1),
$$
which becomes  below $1$ only for $n >> 1/\varepsilon$.
\end{example}

\newcommand{\bibname}{Bibliography}
\bibliographystyle{plain}

\addcontentsline{toc}{section}{Bibliography}

\newpage
\begin{appendices}
{\Huge \appendixname}
\section{Codes}
\subsection{Mathematica code used for Remark 2.10 }
\label{AppendixA}
In Remark 2.10 we have presented the values of the constants $C$ for polynomials of degrees $4,6,8,10,12$ and $14$ and these computations were made with Mathematica in the following way.

To compute $C$ one needs to compute $\sum |\delta_l(\lambda_k,w)|$ where 
$$\delta_l(\lambda,w)= \frac{p(\lambda)-w}{p'(\zeta_l(w))(\lambda-\zeta_l(w))} $$ and where $\zeta_l(w)$ are the roots of $p(\lambda)-w=0.$

We present the case when the polynomial $p$ is the perturbed quartic polynomial, i.e. $p(z)=z^{4}-2 z^{2} \sin(2\varepsilon)+1.$ Similar codes were developed for the perturbed monic polynomials of degrees $6,8,10,12$ and $14.$

First we compute the roots of our polynomial
{\small 
\begin{verbatim}
l = NSolve[(z - E^(I (Pi/4 - Pi/70)))(z - E^(-I (Pi/4 - Pi/70)))
  (z + E^(I (Pi/4 - Pi/70)))(z + E^(-I ( Pi/4 - Pi/70)))== 0, z] 

l1 = N[Part[l, 1, 1, 2]];     l2 = N[Part[l, 2, 1, 2]]
l3 = N[Part[l, 3, 1, 2]];     l4 = N[Part[l, 4, 1, 2]]
\end{verbatim} }
then we compute $\zeta_l(w)$
{\small 
\begin{verbatim}
xi = Solve[(z - E^(I (Pi/4 - Pi/70)))(z - E^(-I (Pi/4 - Pi/70)))
     (z + E^(I (Pi/4 - Pi/70)))(z + E^(-I ( Pi/4 - Pi/70))) == w,
      z] /. w -> 0.992

xi1 = Part[xi, 1, 1, 2]		    xi3 = Part[xi, 3, 1, 2]
xi2 = Part[xi, 2, 1, 2]		    xi4 = Part[xi, 4, 1, 2].
\end{verbatim}}
We also need the derivative of $p,$ which will be computed with the following line
{\small 
\begin{verbatim}
ped[z_] := D[(z - E^(I (Pi/4 - Pi/70))) (z - E^(-I (Pi/4 - Pi/70)))
        (z + E^(I (Pi/4 - Pi/70)))(z + E^(-I ( Pi/4 - Pi/70))), z];
\end{verbatim}}
\noindent and all these are needed to calculate each $ \delta_l(\lambda_k,w) .$ Here are just $\delta_l(\lambda_1,w),$ for $l=1,\dots ,4,$ since by replacing $\lambda_1$ with the others one finds all of them:
 {\small 
\begin{verbatim}
d1l1 = 0.992/((ped[x] /. x -> xi1) (xi1 - l1))
d2l1 = 0.992/((ped[x] /. x -> xi2) (xi2 - l1))	   
d3l1 = 0.992/((ped[x] /. x -> xi3) (xi3 - l1))	   
d4l1 = 0.992/((ped[x] /. x -> xi4) (xi4 - l1)).
\end{verbatim}}
After all $\delta_l(\lambda_k,w)$ are computed, one just has to sum up the absolute values of them,
 {\small 
\begin{verbatim}
Abs[d1l1] + Abs[d2l1] + Abs[d3l1] + Abs[d4l1] + Abs[d1l2] + 
Abs[d2l2] + Abs[d3l2] + Abs[d4l2] + Abs[d1l3] + Abs[d2l3] + 
Abs[d3l3] + Abs[d4l3] + Abs[d1l4] + Abs[d2l4] + Abs[d3l4] + 
Abs[d4l4]

2293.81
\end{verbatim}}
\noindent and finally the constant $C$ is this value multiplied with $s,$ the distance from the lemniscate to the critical point outside the lemniscate. This $s$ was computed with the program from the next subsection.

\subsection{Matlab program - computes $s$ from Remark 2.10} 
\label{AppendixB}
{\small
\begin{verbatim}
function pituus = minlength(piste,polyn,rho,loota)
  %UNTITLED piste=[x;y] (column vector) is the critical point
  % and polyn=[a_n, .. , a_1,a_0] (row vector) for the REAL   
  % polynomial an*z^n + ... + a1*z+a0, rho is the level 
  % (p(z)=rho gives the lemniscate)
  % loota=the size of the drawing area. MAKE BIG ENOUGH FOR THE
  % LEMNISCATE TO FIT IN, oterwise the Matlab's contour command 
  % returns only partof the lemniscate drawn and you get wrong 
  % (and strange) answers
  
% get the level curve data 
C = kontour(polyn,[rho],[-loota,-loota,loota,loota]);
[~,m]=size(C);
% (copypaste from lemnlength:)
katkot=[];
for ii=2:m
  if (C(2,ii)>10)
      katkot=[katkot, ii];
  end  
end
% build M matrix that contains the start and end points
% for the pieces of the lemniscate in the data
valienlkm=length(katkot)+1;
M=zeros(2,valienlkm); M(1,1)=2; M(2,valienlkm)=m;
for ii=1:(valienlkm-1)
   M(2,ii)=katkot(ii)-1; M(1,ii+1)=katkot(ii)+1; 
end
% go through M and calculate all the angles
pituudet=sqrt(2)*loota*ones(1,m); 
for ii=1:valienlkm
   for jj=M(1,ii):(M(2,ii))
      pituudet(jj)=norm([C(1,jj)-piste(1,1),C(2,jj)-piste(2,1)]);
   end
end
pituus=min(pituudet);
end
\end{verbatim}
}
The programme above uses the following programme:
{\small
\begin{verbatim}
function C = kontour(p,tasot,ruutu)
 % UNTITLED Draws the lemniscates of the polynomial p. 
 % The picture coordinates are in ruutu vector.
 % ruutu=[xmin,ymin,xmax,ymax]
 % tasot=[l1,l2,..,ln] (the levels to be drawn)
 % p=[an,...,a1,a0] (the polynomial an*z^n + ... + a1*z+a0)

juuret=roots(p);
xx=linspace(ruutu(1),ruutu(3),501);
yy=linspace(ruutu(2),ruutu(4),501);
[X,Y]=meshgrid(xx,yy);
Z=abs(polyval(p,X+1i*Y));
if length(tasot)==1
  [C,~]=contour(X,Y,Z,[tasot(1),tasot(1)]);
else
  [C,~]=contour(X,Y,Z,tasot);
end
hold on, plot(real(juuret),imag(juuret),'.k')
end
\end{verbatim}
}

\section{Expansions for the Riesz projection} \label{AppendixC}
\subsection{The quadratic polynomial } \label{AppendixC1}

\noindent Let $p(z)=z^{2}-1$ with solutions $\lambda_1= 1,$ and $\lambda_2=-1.$ Denoting $\delta_1(z)=(1+z)/2$ and $\delta_2(z) = (1-z)/2$ we obtain
\begin{eqnarray}
f_1(z^{2}-1)&=& \frac{1}{2} [f(z)+f(-z)]+\frac{f(z)-f(-z)}{2z} \label{f1d2} \\
f_2(z^{2}-1)&=& \frac{1}{2} [f(z)+f(-z)]-\frac{f(z)-f(-z)}{2z} \label{f2d2}.
\end{eqnarray}

\noindent Consider the Riesz projection which is obtained by assuming $\varphi$ to be identically $1$ near $1$ and $-1$ near $-1.$ We have, for $|w|<1,$ where $w=z^{2}-1$ 
$$
(w+1)^{1/2}= 1+\frac{1}{2}w-\frac{1}{8}w^{2}+\frac{1}{16}w^{3}+\cdots 
$$
\noindent and
$$
 (w+1)^{-1/2}= 1-\frac{1}{2}w+\frac{3}{8}w^{2}-\frac{5}{16}w^{3}+\cdots .
 $$

\noindent Let us compute the two-centric representation first around the point $1.$ There with $z=(w+1)^{1/2}$
\begin{eqnarray}
\delta_1(z)&=& 1+\frac{1}{4}w -\frac{1}{16}w^{2} +\frac{1}{32}w^{3}+\cdots , \nonumber\\
\delta_2(z)&=& -\frac{1}{4}w +\frac{1}{16}w^{2} -\frac{1}{32}w^{3}+\cdots .\nonumber
\end{eqnarray}

\noindent From (\ref{f1d2}) and (\ref{f2d2}) we obtain
\begin{eqnarray}
f_1(w) &=& 1-\frac{1}{2}w +\frac{3}{8}w^{2}-\frac{5}{16}w^{3} +\cdots , \nonumber\\
f_2(w) &=& -1+\frac{1}{2}w -\frac{3}{8}w^{2}+\frac{5}{16}w^{3} +\cdots .\nonumber
\end{eqnarray}

\noindent This gives

\begin{eqnarray}
\delta_1(z)f_1(w) &=& 1-\frac{1}{4}w +\frac{3}{16}w^{2} -\frac{5}{32}w^{3}+\cdots ,\nonumber\\
\delta_2(z)f_2(w) &=& \frac{1}{4}w -\frac{3}{16}w^{2} +\frac{5}{32}w^{3} +\cdots \nonumber
\end{eqnarray}

\noindent so their sum is identically $1.$ Near $-1$ we have 
\begin{eqnarray}
\delta_1(z)&=& -\frac{1}{4}w +\frac{1}{16}w^{2} -\frac{1}{32}w^{3}+\cdots ,\nonumber\\
\delta_2(z)&=& 1+\frac{1}{4}w -\frac{1}{16}w^{2} +\frac{1}{32}w^{3}+\cdots , \nonumber
\end{eqnarray}

\noindent which gives
\begin{eqnarray}
\delta_1(z)f_1(w) &=& -\frac{1}{4}w +\frac{3}{16}w^{2} -\frac{5}{32}w^{3}+\cdots ,\nonumber\\
\delta_2(z)f_2(w) &=& -1+\frac{1}{4}w -\frac{3}{16}w^{2} +\frac{5}{32}w^{3} +\cdots .\nonumber
\end{eqnarray}

\noindent So, near $-1$ their sum is identically $-1.$

%%%%%%%%%%%%%%%%%%%%%%%%%%%%%%%%%%%%%%%%%%%%%%%%%%%%%%%%%%%%%%%%%%%%%%%%%%%%%%%%%%%%%%%%%%%%%%%%%
\subsection{The quartic polynomial }\label{AppendixC2}

\noindent Let $p(z)=z^{4}+1$ with roots
$$\lambda_1=(1+i)/ \sqrt{2},\text{ } \lambda_2=(-1+i)/ \sqrt{2},\text{ } \lambda_3=(-1-i)/ \sqrt{2},\text{ }\lambda_4=(1-i)/ \sqrt{2}.$$ Denoting 
\begin{eqnarray}
\delta_1 (z)&=& \tfrac{(-1-i)z^{3}-i\sqrt{2} z^{2}+(1-i)z+\sqrt{2}}{4\sqrt{2}}\nonumber\\
&=& (-\tfrac{1}{8}-\tfrac{i}{8})((1+i)+\sqrt{2}z)(z^{2}+i) \nonumber\\
\delta_2 (z)&=& \tfrac{(1-i)z^{3}+i\sqrt{2} z^{2}-(1+i)z+\sqrt{2}}{4\sqrt{2}}
\nonumber\\
&=& (\tfrac{1}{8}+\tfrac{i}{8})((1+i)-\sqrt{2}z)(z^{2}-i) \nonumber\\
%\end{eqnarray}
%\begin{eqnarray}
\delta_3 (z)&=& \tfrac{(1+i)z^{3}-i\sqrt{2} z^{2}-(1-i)z+\sqrt{2}}{4\sqrt{2}}\nonumber\\
&=& (-\tfrac{1}{8}-\tfrac{i}{8})((1+i)-\sqrt{2}z)(z^{2}+i) \nonumber\\
\delta_4 (z)&=& \tfrac{(-1+i)z^{3}+i\sqrt{2} z^{2}+(1+i)z+\sqrt{2}}{4\sqrt{2}}\nonumber\\
&=& (\tfrac{1}{8}+\tfrac{i}{8})((1+i)+\sqrt{2}z)(z^{2}-i) \nonumber
\end{eqnarray}

\noindent we obtain

\begin{eqnarray} 
f_1(z^{4}+1) &=& \tfrac{1}{4z^{3}} [ (\tfrac{-1+i}{\sqrt{2}}+iz+\tfrac{1+i}{\sqrt{2}} z^{2}+z^{3})f(z)+ (\tfrac{-1-i}{\sqrt{2}}-iz+\tfrac{1-i}{\sqrt{2}} z^{2}+z^{3})f(iz) \nonumber\\
&\text{ }&+  (\tfrac{1-i}{\sqrt{2}}+iz-\tfrac{1+i}{\sqrt{2}} z^{2}+z^{3})f(-z)+ (\tfrac{1+i}{\sqrt{2}}-iz-\tfrac{1-i}{\sqrt{2}} z^{2}+z^{3})f(-iz)] \nonumber\\
\label{f1}\\
f_2(z^{4}+1)&=& \tfrac{1}{4z^{3}} [ (\tfrac{1+i}{\sqrt{2}}-iz-\tfrac{1-i}{\sqrt{2}} z^{2}+z^{3})f(z)+ (\tfrac{-1+i}{\sqrt{2}}+iz+\tfrac{1+i}{\sqrt{2}} z^{2}+z^{3})f(iz) \nonumber\\
&\text{ }&+  (\tfrac{-1-i}{\sqrt{2}}-iz+\tfrac{1-i}{\sqrt{2}} z^{2}+z^{3})f(-z)+ (\tfrac{1-i}{\sqrt{2}}+iz-\tfrac{1+i}{\sqrt{2}} z^{2}+z^{3})f(-iz)]\nonumber\\ \label{f2}
\end{eqnarray}
\begin{eqnarray}
f_3(z^{4}+1) &=& \tfrac{1}{4z^{3}} [ (\tfrac{1-i}{\sqrt{2}}+iz-\tfrac{1+i}{\sqrt{2}} z^{2}+z^{3})f(z)+ (\tfrac{1+i}{\sqrt{2}}-iz-\tfrac{1-i}{\sqrt{2}} z^{2}+z^{3})f(iz) \nonumber\\
&\text{ }&+  (\tfrac{-1+i}{\sqrt{2}}+iz+\tfrac{1+i}{\sqrt{2}} z^{2}+z^{3})f(-z)+ (\tfrac{-1-i}{\sqrt{2}}-iz+\tfrac{1-i}{\sqrt{2}} z^{2}+z^{3})f(-iz)]\nonumber\\ \label{f3}\\
f_4(z^{4}+1) &=& \tfrac{1}{4z^{3}} [ (\tfrac{-1-i}{\sqrt{2}}-iz+\tfrac{1-i}{\sqrt{2}} z^{2}+z^{3})f(z)+ (\tfrac{1-i}{\sqrt{2}}+iz-\tfrac{1+i}{\sqrt{2}} z^{2}+z^{3})f(iz) \nonumber\\
&\text{ }&+  (\tfrac{1+i}{\sqrt{2}}-iz-\tfrac{1-i}{\sqrt{2}} z^{2}+z^{3})f(-z)+ (\tfrac{-1+i}{\sqrt{2}}+iz+\tfrac{1+i}{\sqrt{2}} z^{2}+z^{3})f(-iz)]\nonumber\\ \label{f4}
\end{eqnarray}

\noindent Consider the Riesz spectral projection which is obtained by assuming $\varphi$ to be identically $1$ near $\lambda_1$ and $\lambda_4,$ and $-1$ near $\lambda_2$ and $\lambda_3.$ We have, for $|w|<1,$ where $w=z^{4}+1$ 
$$ 
(w-1)^{1/4} = \frac{1+i}{\sqrt{2}} -\frac{1}{4} \frac{1+i}{\sqrt{2}} w -\frac{3}{32}\frac{1+i}{\sqrt{2}}w^{2}-\frac{7}{128}\frac{1+i}{\sqrt{2}}w^{3} +\cdots .
$$

\noindent Let us compute the four-centric representation first around $\lambda_1.$ There with $z=(w-1)^{1/4}$ we have
\begin{eqnarray}
\delta_1 &=& 1-\tfrac{3}{8}w-\tfrac{5}{64}w^{2}-\tfrac{5}{128}w^{3}+\cdots \nonumber\\
\delta_2 &=& (\tfrac{1}{8}-\tfrac{i}{8})w+\tfrac{1}{32}w^{2}+(\tfrac{1}{64}+\tfrac{i}{256})w^{3}+\cdots \nonumber\\
\delta_3 &=& \tfrac{1}{8}w+\tfrac{1}{64}w^{2}+\tfrac{1}{128}w^{3}+\cdots\nonumber\\
\delta_4 &=& (\tfrac{1}{8}+\tfrac{i}{8})w+\tfrac{1}{32}w^{2}+(\tfrac{1}{64}-\tfrac{i}{256})w^{3}+\cdots .\nonumber
\end{eqnarray}

\noindent From (\ref{f1}), (\ref{f2}), (\ref{f3}) and (\ref{f4}) we obtain
\begin{eqnarray}
f_1(w) &=& 1+(\tfrac{1}{2}-\tfrac{i}{4})w+(\tfrac{13}{32}-\tfrac{i}{4})w^{2}+(\tfrac{23}{64}-\tfrac{31 i}{128})w^{3}+\cdots , \nonumber\\
f_2(w) &=& -1+(-\tfrac{1}{2}-\tfrac{i}{4})w+(-\tfrac{13}{32}-\tfrac{i}{4})w^{2}+(-\tfrac{23}{64}-\tfrac{31 i}{128})w^{3}+\cdots ,\nonumber\\
f_3(w) &=& -1+(-\tfrac{1}{2}+\tfrac{i}{4})w+(-\tfrac{13}{32}+\tfrac{i}{4})w^{2}+(-\tfrac{23}{64}+\tfrac{31 i}{128})w^{3}+\cdots ,\nonumber\\
f_4(w) &=& 1+(\tfrac{1}{2}+\tfrac{i}{4})w+(\tfrac{13}{32}+\tfrac{i}{4})w^{2}+(\tfrac{23}{64}+\tfrac{31 i}{128})w^{3}+\cdots .\nonumber
\end{eqnarray}

\noindent This gives
\begin{eqnarray}
\delta_1(z)f_1(w) &=& 1+(\tfrac{1}{8}-\tfrac{i}{4})w+(\tfrac{9}{64}-\tfrac{5i}{32})w^{2}+(\tfrac{33}{256}-\tfrac{33 i}{256})w^{3}+\cdots , \nonumber\\
\delta_2(z)f_2(w) &=& -(\tfrac{1}{8}-\tfrac{i}{8})w-(\tfrac{1}{8}-\tfrac{i}{32})w^{2}-(\tfrac{29}{256}-\tfrac{i}{128})w^{3}+\cdots ,\nonumber\\
\delta_3(z)f_3(w) &=& -\tfrac{1}{8}w-(\tfrac{5}{64}-\tfrac{i}{32})w^{2}-(\tfrac{17}{256}-\tfrac{9 i}{256})w^{3}+\cdots ,\nonumber\\
\delta_4(z)f_4(w) &=& (\tfrac{1}{8}+\tfrac{i}{8})w+(\tfrac{1}{16}+\tfrac{3i}{32})w^{2}+(\tfrac{13}{256}+\tfrac{11 i}{128})w^{3}+\cdots \nonumber
\end{eqnarray}
\noindent so their sum is identically $1.$ Near $\lambda_2$ we have

$$ 
\delta_1= \delta_4(z), \quad \delta_2=\delta_1(z) \quad \delta_3=\delta_2(z) \quad \delta_4=\delta_3(z)
$$
\noindent which gives
\begin{eqnarray}
\delta_1(z)f_1(w) &=& (\tfrac{1}{8}+\tfrac{i}{8})w+(\tfrac{1}{8}+\tfrac{i}{32})w^{2}+(\tfrac{29}{256}+\tfrac{i}{128})w^{3}+\cdots , \nonumber\\
\delta_2(z)f_2(w) &=& -1-(\tfrac{1}{8}+\tfrac{i}{4})w-(\tfrac{9}{64}+\tfrac{5i}{32})w^{2}-(\tfrac{33}{256}+\tfrac{33i}{256})w^{3}+\cdots ,\nonumber\\
\delta_3(z)f_3(w) &=& -(\tfrac{1}{8}-\tfrac{i}{8})w-(\tfrac{1}{16}-\tfrac{3i}{32})w^{2}-(\tfrac{13}{256}-\tfrac{11 i}{128})w^{3}+\cdots ,\nonumber\\
\delta_4(z)f_4(w) &=& \tfrac{1}{8}w+(\tfrac{5}{64}+\tfrac{i}{32})w^{2}+(\tfrac{17}{256}+\tfrac{9 i}{256})w^{3}+\cdots .\nonumber
\end{eqnarray}
\noindent So, near $\lambda_2$ their sum is $-1.$ Near $\lambda_3$ we have
$$ 
\delta_1= \delta_3(z), \quad \delta_2=\delta_4(z) \quad \delta_3=\delta_1(z) \quad \delta_4=\delta_2(z)
$$
\noindent which gives
\begin{eqnarray}
\delta_1(z)f_1(w) &=& \tfrac{1}{8}w+(\tfrac{5}{64}-\tfrac{i}{32})w^{2}+(\tfrac{17}{256}-\tfrac{9i}{256})w^{3}+\cdots , \nonumber\\
\delta_2(z)f_2(w) &=& -(\tfrac{1}{8}+\tfrac{i}{8})w-(\tfrac{1}{16}+\tfrac{3i}{32})w^{2}-(\tfrac{13}{256}+\tfrac{11i}{128})w^{3}+\cdots ,\nonumber\\
\delta_3(z)f_3(w) &=& -1-(\tfrac{1}{8}-\tfrac{i}{4})w-(\tfrac{9}{64}-\tfrac{5i}{32})w^{2}-(\tfrac{33}{256}-\tfrac{33 i}{256})w^{3}+\cdots ,\nonumber\\
\delta_4(z)f_4(w) &=& (\tfrac{1}{8}-\tfrac{i}{8})w+(\tfrac{1}{8}-\tfrac{i}{32})w^{2}+(\tfrac{29}{256}-\tfrac{i}{128})w^{3}+\cdots \nonumber
\end{eqnarray}
so their sum is identically $-1$ Near $\lambda_4$ we have
$$
 \delta_1= \delta_2(z), \quad \delta_2=\delta_3(z) \quad \delta_3=\delta_4(z) \quad \delta_4=\delta_1(z)
$$
\noindent which gives
\begin{eqnarray}
\delta_1(z)f_1(w) &=& (\tfrac{1}{8}-\tfrac{i}{8})w+(\tfrac{1}{16}-\tfrac{3i}{32})w^{2}+(\tfrac{13}{256}-\tfrac{11i}{128})w^{3}+\cdots , \nonumber\\
\delta_2(z)f_2(w) &=& -\tfrac{1}{8}w-(\tfrac{5}{16}+\tfrac{i}{32})w^{2}-(\tfrac{17}{256}+\tfrac{9i}{256})w^{3}+\cdots ,\nonumber\\
\delta_3(z)f_3(w) &=& -(\tfrac{1}{8}+\tfrac{i}{8})w-(\tfrac{1}{8}+\tfrac{i}{32})w^{2}-(\tfrac{29}{256}+\tfrac{i}{128})w^{3}+\cdots ,\nonumber\\
\delta_4(z)f_4(w) &=& 1+(\tfrac{1}{8}+\tfrac{i}{4})w+(\tfrac{9}{64}+\tfrac{5i}{32})w^{2}+(\tfrac{33}{256}+\tfrac{33i}{256})w^{3}+\cdots .\nonumber
\end{eqnarray}
So, near $\lambda_4$ their sum is $1.$

%%%%%%%%%%%%%%%%%%%%%%%%%%%%%%%%%%%%%%%%%%%%%%%%%%%%%%%%%%%%%%%%%%%%%%%%%%%%%%%%%%%%%%%%%%%%%%%%%%
\subsection{The perturbed quartic polynomial }\label{AppendixC3}

Now let's perturb the roots of $p(z)=z^{4}+1$ with $\varepsilon.$ Therefore, our polynomial becomes $$ p_\varepsilon(z)=z^{4}-2 z^{2} \sin(2\varepsilon)+1 $$ 
with roots 
$$
\begin{array}{llll}
\lambda_1= e^{i(\pi /4-\varepsilon)}, & \lambda_2=-e^{-i(\pi /4-\varepsilon)}, &
\lambda_3=-e^{i(\pi /4-\varepsilon)} , & \lambda_4=e^{-i(\pi /4-\varepsilon)} ,
\end{array}
$$
and derivative $p_\varepsilon'(z)=4 z^{3}-4 z \sin(2\varepsilon).$ 

Denoting
\begin{eqnarray}
\delta_1(z) &=& (\tfrac{1}{8}-\tfrac{i}{8})((1+i)+\sqrt{2}e^{i\varepsilon}z)(e^{2i\varepsilon}-iz^{2})\sec(2\varepsilon) \nonumber\\
\delta_2(z) &=& \tfrac{e^{-3i\varepsilon}}{4\sqrt{2}}(\sqrt{2}e^{i\varepsilon}-(1+i)z)(1+ie^{2i\varepsilon}z^{2})\sec(2\varepsilon)\nonumber\\
\delta_3(z) &=& (\tfrac{1}{8}+\tfrac{i}{8})((-1-i)+\sqrt{2}e^{i\varepsilon}z)(ie^{2i\varepsilon}+z^{2})\sec(2\varepsilon)\nonumber\\
\delta_4(z) &=& \tfrac{e^{-3i\varepsilon}}{4\sqrt{2}}(\sqrt{2}e^{i\varepsilon}+(1+i)z)(1+ie^{2i\varepsilon}z^{2})\sec(2\varepsilon)\nonumber 
\end{eqnarray}
\noindent we get
\begin{eqnarray}
f_1(p_\varepsilon(z)) &=& \tfrac{i}{\sqrt{2}z^{3}} e^{-3i\varepsilon}(1+e^{2i\varepsilon}z^{2})\label{f1d4eps}\\
f_2(p_\varepsilon(z)) &=& \tfrac{1}{\sqrt{2}z^{3}} e^{i\varepsilon}(e^{2i\varepsilon}-z^{2})\label{f2d4eps}\\
f_3(p_\varepsilon(z)) &=& \tfrac{-i}{\sqrt{2}z^{3}} e^{-3i\varepsilon}(1+e^{2i\varepsilon}z^{2})\label{f3d4eps}\\
f_4(p_\varepsilon(z)) &=& \tfrac{-1}{\sqrt{2}z^{3}} e^{i\varepsilon}(e^{2i\varepsilon}-z^{2}).\label{f4d4eps}
\end{eqnarray}

\noindent Considering the Riesz projection which is obtained by assuming $\varphi$ to be identically $1$ near the roots on the right hand side of the imaginary axis and $-1$ near the others, we have, for $|w|<1,$ where $w=p_\varepsilon (z),$

\begin{eqnarray}
z &=&  (-1)^{1/4}\left(1-i\varepsilon-\tfrac{1}{2}\varepsilon^{2}+\tfrac{i}{6}\varepsilon^{3}+\dots \right)\nonumber\\
&\text{ }& -\tfrac{1}{4}(-1)^{1/4} \left( 1+i\varepsilon+\tfrac{3}{2}\varepsilon^{2}+\tfrac{11i}{6}\varepsilon^{3}+\dots \right)w \nonumber\\
&\text{ }& +\tfrac{1}{32}(-1)^{3/4} \left( 1+3i\varepsilon-\tfrac{1}{2}\varepsilon^{2}+\tfrac{15i}{5}\varepsilon^{3} +\dots \right)w^{2} +\dots \nonumber
\end{eqnarray}

\noindent Let us compute the four-centric representation first around $\lambda_1.$ There with $w=z^{4}-2 z^{2} \sin(2\varepsilon)+1$
\begin{eqnarray}
\delta_1(z) &=& 1-\tfrac{1}{8}\left(3+2i\varepsilon +8\varepsilon^{2}+\tfrac{8i}{3}\varepsilon^{3}+\dots \right)w+\tfrac{1}{32}\left( \left(2+\tfrac{3i}{2}\right)-(4-4i)\varepsilon \right. \nonumber\\
&\text{ }&+ \left. \left(8+2i\right)\varepsilon^{2}-\left(\tfrac{40}{3}-\tfrac{64i}{3}\right)\varepsilon^{3}+\dots \right)w^{2}+\dots \nonumber\\
%\end{eqnarray}
%\begin{eqnarray}
\delta_2(z) &=& \tfrac{1}{8}\left( \left(1-i\right)-2\varepsilon +\left(4-2i\right) \varepsilon^{2}-\tfrac{20}{3}\varepsilon^{3}+\dots \right)w \nonumber\\
&\text{ }&- \tfrac{1}{32}\left( (1-i)-(1-i)\varepsilon +(5-7i)\varepsilon^{2}-\left( \tfrac{16}{3}-\tfrac{16i}{3}\right) \varepsilon^{3} +\dots \right)w^{2}+\dots \nonumber\\
\delta_3(z) &=& \tfrac{1}{8}\left(1+2i\varepsilon+\tfrac{8i}{3}\varepsilon^{3}+\dots \right)w- \tfrac{1}{32}\left( \left( 1+ \tfrac{i}{2}\right)-\left(2-2i\right)\varepsilon  \right. \nonumber\\
&\text{ }&+ \left. \left(4-2i\right)\varepsilon^{2}-\left(\tfrac{8}{3} - \tfrac{32i}{3} \right) \varepsilon^{3} +\dots\right) w^{2}+\dots \nonumber\\
\delta_4(z) &=& \tfrac{1}{8}\left( \left(1-i\right)+2\varepsilon +\left(4+2i\right) \varepsilon^{2}+\tfrac{20}{3}\varepsilon^{3}+\dots \right)w \nonumber\\
&\text{ }&- \tfrac{1}{32}\left(2i-(1-i)\varepsilon -(1-11i)\varepsilon^{2}-\left( \tfrac{16}{3}-\tfrac{16i}{3}\right)\varepsilon^{3}+\dots \right)w^{2}+\dots .\nonumber
\end{eqnarray}

\noindent From (\ref{f1d4eps}), (\ref{f2d4eps}), (\ref{f3d4eps}) and (\ref{f4d4eps}) we obtain
\begin{eqnarray}
f_1(w)&=& 1+\left( \left(\tfrac{1}{2}-\tfrac{i}{4}\right)+\left(\tfrac{1}{2}+i\right)\varepsilon+ \dots \right)w \nonumber\\%\left(\tfrac{2}{3}+\tfrac{4i}{3}\right)\varepsilon^3+
&\text{ }&+\left( \left(\tfrac{3}{16}-\tfrac{7i}{32}\right)+\left(\tfrac{7}{8}-\tfrac{3i}{4}\right)\varepsilon- \left(\tfrac{3}{4}-\tfrac{7i}{8}\right) \varepsilon^2 +\dots \right)w^{2}+\dots \nonumber\\%+\left(\tfrac{7}{6}+i\right) \varepsilon^3
f_2(w)&=& \left( -1+(2-4i)\varepsilon+(10+8i)\varepsilon^2-\dots \right) \nonumber\\%\left(\tfrac{52}{3}\tfrac{56i}{3}\right)\varepsilon^3 \right +
&\text{ }&- \left( \left(\tfrac{1}{2}+\tfrac{i}{4}\right)-\left(\tfrac{5}{2}-\tfrac{7i}{2}\right)\varepsilon - \left(12-\tfrac{21i}{2}\right) \varepsilon^2 +\dots \right) w \nonumber\\ %\left(\tfrac{77}{3}- \tfrac{79i}{3}\right)\varepsilon^3+
&\text{ }&- \left( \left(\tfrac{1}{4}+\tfrac{3i}{32}\right)-\left(\tfrac{9}{8}-\tfrac{39i}{16}\right)\varepsilon -\left(11+\tfrac{93i}{16}\right) \varepsilon^2 +\dots \right)w^{2}+\dots \nonumber\\ %\left(\tfrac{69}{4}-\tfrac{239i}{8}\right)\varepsilon^3+
%\end{eqnarray}
%\begin{eqnarray}
f_3(w)&=& -1-\left( \left(\tfrac{1}{2}-\tfrac{i}{4}\right)+\left(\tfrac{1}{2}-i\right)\varepsilon + \dots \right)w \nonumber\\% \left(\tfrac{2}{3}+\tfrac{4i}{3}\right) \varepsilon^3+
&\text{ }&-\left( \left(\tfrac{3}{16}-\tfrac{7i}{32}\right)+\left(\tfrac{7}{8}+\tfrac{3i}{4}\right)\varepsilon-\left(\tfrac{3}{4}-\tfrac{7i}{8}\right) \varepsilon^2 +\dots \right)w^{2}+\dots \nonumber\\%\left(\tfrac{7}{6}+i\right)\varepsilon^3+
f_4(w)&=&\left(1-(2-4i)\varepsilon -(10+8i)\varepsilon^2+ \dots \right) \nonumber\\%\left(\tfrac{52}{3}-\tfrac{56i}{3}\right)\varepsilon^3+
&\text{ }&+ \left( \left(\tfrac{1}{2}+\tfrac{i}{4}\right)-\left(\tfrac{5}{2}-\tfrac{7i}{2}\right)\varepsilon - \left(12+\tfrac{21i}{2}\right) \varepsilon^2 +
\dots \right)w \nonumber\\ %\left(\tfrac{77}{3}-\tfrac{79i}{3}\right)\varepsilon^3+
&\textbf{ }&+ \left( \left(\tfrac{1}{4}+\tfrac{3i}{32}\right)-\left(\tfrac{9}{8}-\tfrac{39i}{16}\right)\varepsilon -\left(11+\tfrac{93i}{16}\right) \varepsilon^2 +
\dots \right)w^{2}+\dots \nonumber%\left(\tfrac{69}{4}-\tfrac{239i}{8}\right)\varepsilon^3+
\end{eqnarray}

\noindent This gives
\begin{eqnarray}
\delta_1(z)f_1(w) &=& 1+\tfrac{w}{8}\left((1-2i)+(4+6i)\varepsilon -8\varepsilon^2  +\dots \right)\nonumber\\%\left(\tfrac{16}{3}+8i\right)\varepsilon^3
&\text{ }&+\tfrac{w^2}{32}\left( \left(2-\tfrac{5i}{2}\right)+(16+12i)\varepsilon -(24-34i)\varepsilon^2  +\dots \right)+\dots \nonumber\\%-\tfrac{4}{3}\varepsilon^3
%\end{eqnarray}
%\begin{eqnarray}
\delta_2(z)f_2(w) &=& \tfrac{w}{8}\left( -(1-i)-6i\varepsilon +(10+8i)\varepsilon^2 +\dots \right)\nonumber\\%-12\varepsilon^3
&\text{ }&-\tfrac{w^2}{32} \left( 2-(1-15i)\varepsilon -(45+11i)\varepsilon^2 +\dots \right) +\dots \nonumber\\%+\left(\tfrac{116}{3}-56i\right)\varepsilon^3
\delta_3(z)f_3(w) &=& \tfrac{w}{8}\left( -1-2i\varepsilon +\dots \right)\nonumber\\ %-\tfrac{8i}{3}\varepsilon^3
&\text{ }&-\tfrac{w^2}{32} \left( \left(1-\tfrac{3i}{2}\right)+(6+6i)\varepsilon -(12-12i)\varepsilon^2 +\dots \right) + \dots \nonumber\\%+8\varepsilon^3
\delta_4(z)f_4(w) &=& \tfrac{w}{8}\left( (1+i)-(4-2i)\varepsilon -(2+8i)\varepsilon^2  +\dots \right)\nonumber\\%+\left(\tfrac{20}{3}-\tfrac{16i}{3}\right)\varepsilon^3
&\text{ }&+\tfrac{w^2}{32}\left( (1+i)-(11-9i)\varepsilon -(33+39i)\varepsilon^2 +\dots \right) + \dots \nonumber%+\left(\tfrac{148}{3}+56i\right)\varepsilon^3
\end{eqnarray}
so their sum is identically $1.$ Near $\lambda_2$ we have
\begin{eqnarray}
\delta_1(z) &=& \left(-(1-i)\varepsilon -\left(\tfrac{4}{3}-\tfrac{4i}{3}\right)\varepsilon^3+\dots \right) \nonumber\\
&\text{ }& + \tfrac{w}{8}\left( (1+i)+2\varepsilon +(4+8i)\varepsilon^2 +\tfrac{8}{3}\varepsilon^3+\dots \right) \nonumber\\
&\text{ }& -\tfrac{w^2}{32} \left( 2-(4-i)\varepsilon -(4-8i)\varepsilon^2-\left(\tfrac{64}{3}+\tfrac{8i}{3}\right)\varepsilon^3+\dots \right) +\dots \nonumber\\
\delta_2(z) &=& \left( 1-3i\varepsilon -3\varepsilon^2+\dots \right)+ \tfrac{w}{8} \left( -3+4i\varepsilon -8\varepsilon^2+\tfrac{40i}{3}\varepsilon^3+\dots \right)  \nonumber\\
&\text{ }& +\tfrac{w^2}{32}\left( \left(2+\tfrac{3i}{2}\right)-(1-i)\varepsilon +(11+8i)\varepsilon^2- \left(\tfrac{16}{3}- \tfrac{16i}{3}\right) \varepsilon^3+\dots \right)+\dots \nonumber\\
%\end{eqnarray}
%\begin{eqnarray}
\delta_3(z) &=& \left((1+i)\varepsilon +\left(\tfrac{4}{3}+\tfrac{4i}{3}\right)\varepsilon^3+\dots \right)\nonumber\\
&\text{ }& +\tfrac{w}{8} \left( (1-i)-2\varepsilon+(4-8i)\varepsilon^2-\tfrac{8}{3}\varepsilon^3\dots \right)\nonumber\\
&\text{ }& -\tfrac{w^2}{32} \left( (1-i)+(2+i)\varepsilon +(8-4i)\varepsilon^2+ \left(\tfrac{32}{3}+\tfrac{40i}{3}\right) \varepsilon^3 +\dots \right) +\dots \nonumber\\
%\end{eqnarray}
%\begin{eqnarray}
\delta_4(z) &=& \left( i\varepsilon +3\varepsilon^2 -\tfrac{8i}{3}\varepsilon^3+\dots \right) +\tfrac{w}{8} \left( 1-4i\varepsilon -\tfrac{40i}{3}\varepsilon^3+\dots \right) \nonumber\\
&\text{ }& -\tfrac{w^2}{32} \left( \left(1+\tfrac{i}{2}\right)+(1-i)\varepsilon +(7+4i)\varepsilon^2 +\left(\tfrac{16}{3} -\tfrac{16i}{3}\right) \varepsilon^3+\dots \right)+...\nonumber
\end{eqnarray}
which gives
\begin{eqnarray}
\delta_1(z)f_1(w) &=& \left( -(1-i)\varepsilon -\left(\tfrac{4}{3}-\tfrac{4i}{3}\right)\varepsilon^3+\dots \right) \nonumber\\
&\text{ }& +\tfrac{w}{8} \left( (1+i)+6i\varepsilon -(8-4i)\varepsilon^2  +\dots \right) \nonumber\\%+8i\varepsilon^3
&\text{ }& +\tfrac{w^2}{32} \left( (3-i)+(7+16i)\varepsilon -(28-16i)\varepsilon^2 +\dots \right) + \dots \nonumber\\%- \left(\tfrac{8}{3}-\tfrac{16i}{3}\right) \varepsilon^3
\delta_2(z)f_2(w) &=& \left(-1+(2-i)\varepsilon +(1+2i)\varepsilon^2  +\dots \right)  \nonumber\\%+\left( \tfrac{2}{3}+\tfrac{2i}{3} \right) \varepsilon^3
&\text{ }& -\tfrac{w}{8} \left( (1+2i)-(8-8i)\varepsilon -(18+14i)\varepsilon^2  +\dots \right)\nonumber\\%+\left(\tfrac{28}{3}-\tfrac{28i}{3}\right) \varepsilon^3
&\text{ }& -\tfrac{w^2}{32} \left( \left(4+\tfrac{3i}{2}\right)-(12-26i)\varepsilon -(69+38i)\varepsilon^2  +\dots \right) + \dots \nonumber\\%+\left(50- \tfrac{452i}{3}\right) \varepsilon^3
\delta_3(z)f_3(w) &=& \left( -(1+i)\varepsilon -\left(\tfrac{4}{3}+\tfrac{4i}{3}\right)\varepsilon^3 +\dots \right) \nonumber\\
&\text{ }& -\tfrac{w}{8} \left( (1-i)+(4+2i)\varepsilon +4i\varepsilon^2 +\dots \right)\nonumber\\%+\left(\tfrac{16}{3}+\tfrac{8i}{3}\right) \varepsilon^3
&\text{ }& +\tfrac{w^2}{32} \left( 2i-(13+2i)\varepsilon +(8-28i)\varepsilon^2 +\dots \right)+ \dots \nonumber\\ %+\left(\tfrac{4}{3}+\tfrac{16i}{3}\right) \varepsilon^3 
%\end{eqnarray}
%\begin{eqnarray}
\delta_4(z)f_4(w) &=& \left( i\varepsilon -(1+2i)\varepsilon^2 +\left(2-\tfrac{2i}{3}\right)\varepsilon^3 +\dots \right) \nonumber\\
&\text{ }& +\tfrac{w}{8} \left( 1-(4-4i)\varepsilon -(10+14i)\varepsilon^2  +\dots \right)\nonumber\\%+ \left( \tfrac{44}{3}-\tfrac{44i}{3} \right) \varepsilon^3
&\text{ }& +\tfrac{w^2}{32} \left( \left(1+\tfrac{i}{2}\right)-(6-12i)\varepsilon -(49-26i)\varepsilon^2  +\dots \right) + \dots \nonumber %+(50-86i)\varepsilon^3
\end{eqnarray}
So, near $\lambda_2$ their sum is $-1.$ Near $\lambda_3$ we have
\begin{eqnarray}
\delta_1(z) &=& \tfrac{w}{8}\left(1+2i\varepsilon+\tfrac{8i}{3}\varepsilon^{3}+\dots \right)+ \tfrac{w^2}{32}\left(-\left( 1+ \tfrac{i}{2}\right)+\left(2-2i\right)\varepsilon  \right. \nonumber\\
&\text{ }&- \left. \left(4-2i\right)\varepsilon^{2}+\left(\tfrac{8}{3} - \tfrac{32i}{3} \right) \varepsilon^{3} +\dots\right) +\dots \nonumber\\
\delta_2(z) &=& \tfrac{w}{8}\left( \left(1-i\right)+2\varepsilon +\left(4+2i\right) \varepsilon^{2}+\tfrac{20}{3}\varepsilon^{3}+\dots \right) \nonumber\\
&\text{ }&- \tfrac{w^2}{32}\left(2i-(1-i)\varepsilon -(1-11i)\varepsilon^{2}-\left( \tfrac{16}{3}-\tfrac{16i}{3}\right)\varepsilon^{3}+\dots \right)+\dots \nonumber\\
\delta_3(z) &=& 1-\tfrac{w}{8}\left(3+2i\varepsilon +8\varepsilon^{2}+\tfrac{8i}{3}\varepsilon^{3}+\dots \right)+\tfrac{w^2}{32}\left( \left(2+\tfrac{3i}{2}\right)-(4-4i)\varepsilon \right. \nonumber\\
&\text{ }&+ \left. \left(8+2i\right)\varepsilon^{2}-\left(\tfrac{40}{3}-\tfrac{64i}{3}\right)\varepsilon^{3}+\dots \right)+\dots \nonumber\\
\delta_4(z) &=& \tfrac{w}{8}\left( \left(1-i\right)-2\varepsilon +\left(4-2i\right) \varepsilon^{2}-\tfrac{20}{3}\varepsilon^{3}+\dots \right) \nonumber\\
&\text{ }&- \tfrac{w^2}{32}\left( (1-i)-(1-i)\varepsilon +(5-7i)\varepsilon^{2}-\left( \tfrac{16}{3}-\tfrac{16i}{3}\right) \varepsilon^{3} +\dots \right)+\dots \nonumber
\end{eqnarray}
which gives
\begin{eqnarray}
\delta_1(z)f_1(w) &=& \tfrac{w}{8}\left( 1+2i\varepsilon +\tfrac{8i}{3}\varepsilon^3+\dots \right)\nonumber\\ 
&\text{ }& +\tfrac{w^2}{32} \left( \left(1-\tfrac{3i}{2}\right)+(6+6i)\varepsilon -(12-12i)\varepsilon^2 +\dots \right)+ \dots \nonumber\\%+8\varepsilon^3
\delta_2(z)f_2(w) &=& -\tfrac{w}{8}\left( (1+i)-(4-2i)\varepsilon -(2+8i)\varepsilon^2  +\dots \right)\nonumber\\%+\left(\tfrac{20}{3}-\tfrac{16i}{3}\right)\varepsilon^3
&\text{ }& -\tfrac{w^2}{32}\left( (1+i)-(11-9i)\varepsilon -(33+39i)\varepsilon^2 +\dots \right)+ \dots \nonumber\\%+\left(\tfrac{148}{3}+56i\right)\varepsilon^3
\delta_3(z)f_3(w) &=& -1-\tfrac{w}{8}\left((1-2i)+(4+6i)\varepsilon -8\varepsilon^2  +\dots \right)\nonumber\\%+\left(\tfrac{16}{3}+8i\right)\varepsilon^3
&\text{ }& -\tfrac{w^2}{32}\left( \left(2-\tfrac{5i}{2}\right)+(16+12i)\varepsilon -(24-34i)\varepsilon^2  +\dots \right)+\dots \nonumber\\%-\tfrac{4}{3}\varepsilon^3
\delta_4(z)f_4(w) &=& \tfrac{w}{8}\left( (1-i)+6i\varepsilon -(10+8i)\varepsilon^2 +\dots \right)\nonumber\\%+12\varepsilon^3
&\text{ }& +\tfrac{w^2}{32} \left( 2-(1-15i)\varepsilon -(45+11i)\varepsilon^2 +\dots \right)+\dots \nonumber%+\left(\tfrac{116}{3}-56i\right)\varepsilon^3
\end{eqnarray}
so their sum is $1.$ Near $\lambda_4$ we have 
\begin{eqnarray}
\delta_1(z) &=& \left((1+i)\varepsilon +\left(\tfrac{4}{3}+\tfrac{4i}{3}\right)\varepsilon^3+\dots \right)\nonumber\\
&\text{ }& +\tfrac{w}{8} \left( (1-i)-2\varepsilon+(4-8i)\varepsilon^2-\tfrac{8}{3}\varepsilon^3\dots \right)\nonumber\\
&\text{ }& -\tfrac{w^2}{32} \left( (1-i)+(2+i)\varepsilon +(8-4i)\varepsilon^2+\left(\tfrac{32}{3}+\tfrac{40i}{3}\right) \varepsilon^3 +\dots \right) +\dots \nonumber\\
\delta_2(z) &=& \left( i\varepsilon +3\varepsilon^2 -\tfrac{8i}{3}\varepsilon^3+\dots \right) +\tfrac{w}{8} \left( 1-4i\varepsilon -\tfrac{40i}{3}\varepsilon^3+\dots \right) \nonumber\\
&\text{ }& -\tfrac{w^2}{32} \left( \left(1+\tfrac{i}{2}\right)+(1-i)\varepsilon +(7+4i)\varepsilon^2 +\left(\tfrac{16}{3} -\tfrac{16i}{3}\right) \varepsilon^3+\dots \right)+...\nonumber\\
\delta_3(z) &=& \left(-(1-i)\varepsilon -\left(\tfrac{4}{3}-\tfrac{4i}{3}\right)\varepsilon^3+\dots \right) \nonumber\\
&\text{ }& + \tfrac{w}{8}\left( (1+i)+2\varepsilon +(4+8i)\varepsilon^2 +\tfrac{8}{3}\varepsilon^3+\dots \right) \nonumber\\
&\text{ }& -\tfrac{w^2}{32} \left( 2-(4-i)\varepsilon -(4-8i)\varepsilon^2-\left(\tfrac{64}{3}+\tfrac{8i}{3}\right)\varepsilon^3+\dots \right) +\dots \nonumber\\
\delta_4(z) &=& \left( 1-3i\varepsilon -3\varepsilon^2+\dots \right)- \tfrac{w}{8} \left( 3-4i\varepsilon +8\varepsilon^2-\tfrac{40i}{3}\varepsilon^3+\dots \right)  \nonumber\\
&\text{ }& +\tfrac{w^2}{32}\left( \left(2+\tfrac{3i}{2}\right)-(1-i)\varepsilon +(11+8i)\varepsilon^2- \left(\tfrac{16}{3}- \tfrac{16i}{3}\right) \varepsilon^3+\dots \right)+\dots \nonumber
\end{eqnarray}
This gives
\begin{eqnarray}
\delta_1(z)f_1(w) &=& \left( (1+i)\varepsilon +\left(\tfrac{4}{3}+\tfrac{4i}{3}\right)\varepsilon^3 +\dots \right) \nonumber\\
&\text{ }& +\tfrac{w}{8} \left( (1-i)+(4+2i)\varepsilon +4i\varepsilon^2 +\dots \right)\nonumber\\%+\left(\tfrac{16}{3}+\tfrac{8i}{3}\right) \varepsilon^3
&\text{ }& -\tfrac{w^2}{32} \left( 2i-(13+2i)\varepsilon +(8-28i)\varepsilon^2  +\dots \right)+ \dots \nonumber\\%+\left(\tfrac{4}{3}+\tfrac{16i}{3}\right) \varepsilon^3
\delta_2(z)f_2(w) &=& -\left( i\varepsilon -(1+2i)\varepsilon^2 +\left(2-\tfrac{2i}{3}\right)\varepsilon^3 +\dots \right) \nonumber\\
&\text{ }& -\tfrac{w}{8} \left( 1-(4-4i)\varepsilon -(10+14i)\varepsilon^2  +\dots \right)\nonumber\\%+ \left( \tfrac{44}{3}-\tfrac{44i}{3} \right) \varepsilon^3
&\text{ }& -\tfrac{w^2}{32} \left( \left(1+\tfrac{i}{2}\right)-(6-12i)\varepsilon -(49-26i)\varepsilon^2  +\dots \right) + \dots \nonumber\\ %+(50-86i)\varepsilon^3
\delta_3(z)f_3(w) &=& \left( (1-i)\varepsilon +\left(\tfrac{4}{3}-\tfrac{4i}{3}\right)\varepsilon^3+\dots \right) \nonumber\\
&\text{ }& -\tfrac{w}{8} \left( (1+i)+6i\varepsilon -(8-4i)\varepsilon^2  +\dots \right) \nonumber\\%+8i\varepsilon^3
&\text{ }& -\tfrac{w^2}{32} \left( (3-i)+(7+16i)\varepsilon -(28-16i)\varepsilon^2 +\dots \right) + \dots \nonumber\\%- \left(\tfrac{8}{3}-\tfrac{16i}{3}\right) \varepsilon^3
\delta_4(z)f_4(w) &=& \left(1-(2-i)\varepsilon -(1+2i)\varepsilon^2 -\left( \tfrac{2}{3}+\tfrac{2i}{3} \right) \varepsilon^3 +\dots \right)  \nonumber\\
&\text{ }& +\tfrac{w}{8} \left( (1+2i)-(8-8i)\varepsilon -(18+14i)\varepsilon^2  +\dots \right)\nonumber\\%+\left(\tfrac{28}{3}-\tfrac{28i}{3}\right) \varepsilon^3
&\text{ }& +\tfrac{w^2}{32} \left( \left(4+\tfrac{3i}{2}\right)-(12-26i)\varepsilon -(69+38i)\varepsilon^2  +\dots \right) + \dots \nonumber%+\left(50- \tfrac{452i}{3}\right) \varepsilon^3
\end{eqnarray}
so, near $\lambda_4$ their sum is $1.$

%%%%%%%%%%%%%%%%%%%%%%%%%%%%%%%%%%%%%%%%%%%%%%%%%%%%%%%%%%%%%%%%%%%%%%%%%%%%%%%%%%%%%%%%%%%%%%%%
\subsection{The quadratic polynomial to the $n-$th power }\label{AppendixC4}

As a final step we will compute the expansions for the Riesz projection for the polynomial $p^{n}=(z^{2}-1)^{n}.$ As before we let
$w=z^{2}-1$ so for $|w|<1$ we have
$$z=(w+1)^{1/2}=1+\frac{1}{2}w-\frac{1}{8}w^{2}+\frac{1}{16}w^{3}+\cdots . $$

At the begining of this section we have computed the expansion for $p=z^{2}-1,$ when taking $\varphi \equiv 1 $ near $\lambda_1=1$ and $\varphi \equiv -1$ near $\lambda_2=-1,$ $(\lambda_j,$ $j=1,2,$ being the roots of $p).$ Thus we have,
$ \delta_1(z)=(1+z)/2,\text{ } \delta_2(z) = (1-z)/2$  and
\begin{eqnarray}
f_1(w) &=& 1-\frac{1}{2}w +\frac{3}{8}w^{2}-\frac{5}{16}w^{3} +\dots , \nonumber\\
f_2(w) &=& -1+\frac{1}{2}w -\frac{3}{8}w^{2}+\frac{5}{16}w^{3} +\dots \nonumber
\end{eqnarray}
from which we note that $f_2(w)=-f_1(w).$ 

From section 2 we know that for $\alpha=2 \pi /n$ and a given function $g(w)$
\begin{equation}
 w^k g_k(w^n) =\frac{1}{n} \{ g(w)+ e^{-ik\alpha}g(e^{i\alpha}w) +\dots 
+e^{-i(n-1)k\alpha} g(e^{i(n-1)\alpha} w)\} \label{fk}
\end{equation}
and from Theorem 2.1 %\ref{Thm2.1} 
we have
\begin{equation}
\varphi(z) =\sum_{j=1}^d \delta_j(z) [f_{j0}(p(z)^n) + \dots + p(z)^{n-1} f_{j n-1} (p(z)^n)] \label{fzn}
\end{equation}
where $d$ is the degree of $p.$
Denoting
\begin{eqnarray}
F_1(w)&=& f_{10}(w^n) + \dots + w^{n-1} f_{1 n-1} (w^n),\nonumber\\ 
F_2(w)&=& f_{20}(w^n) + \dots + w^{n-1} f_{2 n-1} (w^n),\nonumber
\end{eqnarray}

\noindent since $p(z)=w,$ we have $\displaystyle \varphi(z)=\delta_1(z)F_1(w)+\delta_2(z)F_2(w). $

Since $f_2(w)=-f_1(w)$ we note that also $F_2(w)=-F_1(w).$ Thus it is enough to compute $F_1(w).$ 
We start by replacing $g(w)$ from (\ref{fk}) with $f_1(w)$ and we get
\begin{eqnarray}
 f_{10}(w^n) &=& \tfrac{1}{n} \{ f_1(w)+ f_1(e^{i\alpha}w) +f_1(e^{2i\alpha})+\dots + f_1(e^{i(n-1)\alpha} w)\} \nonumber\\
 f_{11}(w^n) &=& \tfrac{1}{n w} \{ f_1(w)+ e^{-i\alpha}f_1(e^{i\alpha}w) +e^{-2i\alpha}f_1(e^{2i\alpha}w)+\dots \nonumber\\
&\text{ }& + e^{-i(n-1)\alpha} f_1(e^{i(n-1)\alpha} w)\}\nonumber\\
 f_{12}(w^n) &=& \tfrac{1}{n w^2} \{ f_1(w)+ e^{-2i\alpha}f_1(e^{i\alpha}w)+e^{-4i\alpha}f_1(e^{2i\alpha}w) +\dots \nonumber\\
&\text{ }& + e^{-2i(n-1)\alpha} f_1(e^{i(n-1)\alpha} w)\}\nonumber\\
\dots \nonumber\\
 f_{1 n-1}(w^n) &=& \tfrac{1}{n w^{n-1}} \{ f_1(w)+ e^{-i(n-1)\alpha}f_1(e^{i\alpha}w)+e^{-2i(n-1)\alpha}f_1(e^{2i\alpha}w) +\dots \nonumber\\
&\text{ }& + e^{-i(n-1)^{2}\alpha} f_1(e^{i(n-1)\alpha} w)\}\nonumber .
\end{eqnarray} 
Then
\begin{eqnarray}
F_1(w) &=& f_{10}(w^{n})+w f_{11}(w^{n})+w^{2}f_{12}(w^{n})+\dots +w^{n-1}f_{1 n-1}(w^{n}) \nonumber\\
&=& \frac{1}{n} \left( nf_1(w)+ f_1(e^{i\alpha}w)\sum_{k=0}^{n-1} e^{-ik\alpha} +f_1(e^{2i\alpha}w) \sum_{k=0}^{n-1}e^{-2ki\alpha} \right. \nonumber
\end{eqnarray}
\begin{eqnarray}
&\text{ }& \left. +f_1(e^{3i\alpha}w) \sum_{k=0}^{n-1} e^{-3ki\alpha} + \dots +f_1 (e^{i(n-1)\alpha}w) \sum_{k=0}^{n-1} e^{-ik(n-1)\alpha}\right)   \nonumber\\
&=& \frac{1}{n} \sum_{j=0}^{n-1} \left( f_1(e^{ij\alpha}w)\sum_{k=0}^{n-1}e^{-ijk\alpha}\right) \label{F1}
\end{eqnarray}
Now we must check that near $\lambda_1,$ the quantity $\varphi(z)=\delta_1(z)F_1(w)+\delta_2(z)F_2(w)$ is identically $1$ and near the other root is $-1.$ Since $f_1(w)= z^{-1} =((w+1)^{1/2})^{-1}$ it follows that $f_1(e^{ij\alpha}w)= (e^{ij\alpha}w+1)^{-1/2}$ which has the expansion
$$f_1(e^{ij\alpha}w)= 1-\frac{1}{2}e^{ij\alpha}w +\frac{3}{8}e^{2ij\alpha}w^{2}-\frac{5}{16}e^{3ij\alpha}w^{3}+\dots $$
\noindent Replacing this in (\ref{F1}) we get
\begin{eqnarray}
F_1(w) &=& \frac{1}{n} \sum_{j=0}^{n-1}\left( \left(  1-\frac{1}{2}e^{ij\alpha}w +\frac{3}{8}e^{2ij\alpha}w^{2}-\frac{5}{16}e^{3ij\alpha}w^{3}+\dots \right) \sum_{k=0}^{n-1}e^{-ijk\alpha}\right) \nonumber\\
&=& \frac{1}{n} \sum_{j=0}^{n-1}\sum_{k=0}^{n-1}e^{-ijk\alpha} - \frac{1}{2n}w \sum_{j=0}^{n-1}\left( e^{ij\alpha} \sum_{k=0}^{n-1}e^{-ijk\alpha}\right) \nonumber\\
&\text{ }& + \frac{3}{8n}w^{2} \sum_{j=0}^{n-1}\left( e^{2ij\alpha} \sum_{k=0}^{n-1}e^{-ijk\alpha}\right)-\frac{5}{16 n}w^{3} \sum_{j=0}^{n-1}\left( e^{3ij\alpha} \sum_{k=0}^{n-1}e^{-ijk\alpha}\right)+\dots \nonumber\\
&=& 1-\frac{1}{2}w +\frac{3}{8}w^{2}-\frac{5}{16}w^{3} +\dots \nonumber
\end{eqnarray}
since \\ \\
$ \displaystyle\sum_{j=0}^{n-1}\sum_{k=0}^{n-1}e^{-ijk\alpha}=n, \text{ }
 \displaystyle\sum_{j=0}^{n-1}\left( e^{ij\alpha} \sum_{k=0}^{n-1} e^{-ijk\alpha}\right) = n,\text{ }
 \displaystyle\sum_{j=0}^{n-1}\left( e^{2ij\alpha} \sum_{k=0}^{n-1}e^{-ijk\alpha}\right)=n, 
$ $ \displaystyle \sum_{j=0}^{n-1}\left( e^{3ij\alpha} \sum_{k=0}^{n-1}e^{-ijk\alpha}\right)=n$
and so on. Similarly we get $F_2(w)=f_2(w).$\\ \\ Hence, near $\lambda_1 $ we have 
\begin{eqnarray}
\delta_1(z)F_1(w) &=& 1-\frac{1}{4}w +\frac{3}{16}w^{2} -\frac{5}{32}w^{3}+\dots ,\nonumber\\
\delta_2(z)F_2(w) &=& \frac{1}{4}w -\frac{3}{16}w^{2} +\frac{5}{32}w^{3} +\dots \nonumber
\end{eqnarray}

\noindent so their sum is identically $1.$ Near $-1$ we have 
\begin{eqnarray}
\delta_1(z)F_1(w) &=& -\frac{1}{4}w +\frac{3}{16}w^{2} -\frac{5}{32}w^{3}+\dots ,\nonumber\\
\delta_2(z)F_2(w) &=& -1+\frac{1}{4}w -\frac{3}{16}w^{2} +\frac{5}{32}w^{3} +\dots .\nonumber
\end{eqnarray}

\noindent So, their sum is $-1.$

\section{Separating polynomials} \label{AppendixD}

\hspace{4mm} Below, one can see how monic polynomials of degrees $6,8,10,12$ and $14$ get separated and how they look like when appling the perturbations.

For $p(z)=z^{6}-1$ we perturb only the four complex roots and we leave unchanged the other two real roots,
\begin{eqnarray}
p(z) &=& z^{6}-1=(z^{2}-1)(z^{4}+z^{2}+1)\nonumber\\
&=& (z^{2}-1) (z-e^{i\theta})(z-e^{2i\theta})(z+e^{i\theta})(z+e^{2i\theta})\nonumber\\
&=& (z^{2}-1)(z^{2}-e^{2i\theta})(z^{2}-e^{4i\theta})
\end{eqnarray}
where $\theta=\pi/3.$ 

Let $\theta_\varepsilon=\theta-\varepsilon,$ then $l:|p_{\varepsilon}(z)|=1-\eta.$ Letting $\varepsilon=\pi/70,$ we have

\begin{figure}[hb]
	\centering
	\begin{subfigure}[b]{0.48\textwidth}
		\includegraphics[width=\textwidth]{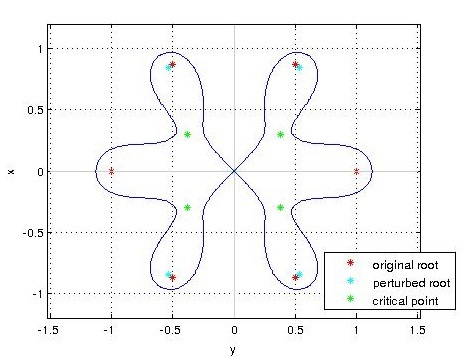}
		\caption{Level 1}
        \label{fig1a}
    \end{subfigure}
	\begin{subfigure}[b]{0.48\textwidth}
		\includegraphics[width=\textwidth]{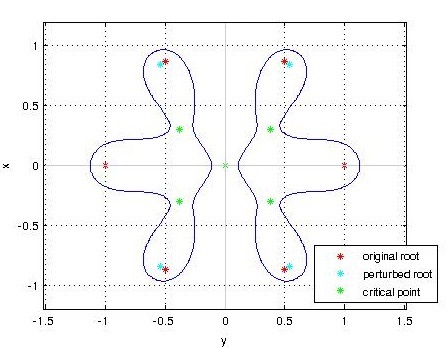}
		\caption{Level 0.998}
        \label{fig1b}
    \end{subfigure}
	\caption{$p(z)=z^{6}-1$}
	\label{fig1}
\end{figure}
%%%%%%%%%%%%%%%%%%%%%%%%%%%%%% picture 3: degree 6
\vspace{2cm}
For $p(z)=z^{8}-1$ we first need to apply a rotation with $\pi/8$ so that no root lays on the imaginary axis. Thus our polynomial becomes $$p(z)=z^{8}+1 $$ with roots $e^{i\pi/8},$ $e^{3i\pi/8},$ $e^{5i\pi/8},$ $e^{7i\pi/8},$ $e^{9i\pi/8},$ $e^{11i\pi/8},$ $e^{13i\pi/8}$ and $e^{15i\pi/8}.$ 

We perturbed only the four roots closest to the imaginary axis with $\varepsilon.$

Therefore the perturbed polynomial is
\begin{eqnarray}
p_\varepsilon(z) &=& (z-e^{i\pi /8})(z-e^{i(3\pi /8-\varepsilon)})(z-e^{i(5\pi /8+\varepsilon)})(z-e^{7i\pi /8})\nonumber\\
&\text{ }& (z-e^{9i\pi /8})(z-e^{i(11\pi /8-\varepsilon)})(z-e^{i(13\pi /8+\varepsilon)})(z-e^{15i\pi /8}) \nonumber
\end{eqnarray}
and for this one we compute the lemniscate and we plot it.
\newpage
\begin{figure}[hb]
	\centering
	\begin{subfigure}[b]{0.48\textwidth}
		\includegraphics[width=\textwidth]{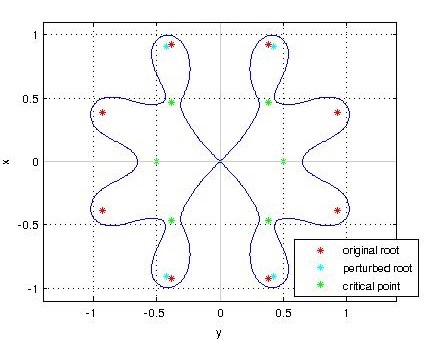}
		\caption{Level 1}
        \label{fig2a}
    \end{subfigure}
	\begin{subfigure}[b]{0.48\textwidth}
		\includegraphics[width=\textwidth]{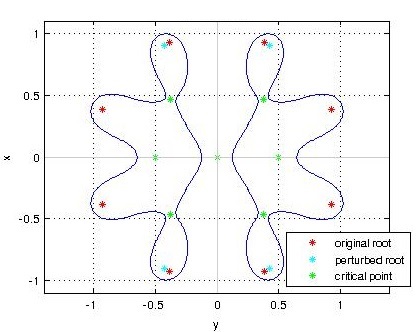}
		\caption{Level 0.998}
        \label{fig2b}
    \end{subfigure}
	\caption{$p(z)=z^{8}-1$}
	\label{fig2}
\end{figure}
%%%%%%%%%%%%%%%%%%%%%%%%%%%%% picture 4: degree 8
\vspace{2cm}

For $p(z)=z^{10}-1$ with roots $1,$ $-1,$ $e^{i\pi/5},$ $e^{2i\pi/5},$ $e^{3i\pi/5},$ $e^{4i\pi/5},$ $e^{6i\pi/5},$ $e^{7i\pi/5},$ $e^{8i\pi/5}$ and $e^{9i\pi/5}$ we perturb the four roots that are closest to $i\mathbb{R}$ and we get
\begin{eqnarray}
p_\varepsilon(z) &=& (z^{2}-1)(z-e^{i\pi /5})(z-e^{i(2\pi /5-\varepsilon)})(z-e^{i(3\pi /5+\varepsilon)})(z-e^{4i\pi /5})\nonumber\\
&\text{ }& (z-e^{6i\pi /5})(z-e^{i(7\pi /5-\varepsilon)})(z-e^{i(8\pi /5+\varepsilon)})(z-e^{9i\pi /5}). \nonumber
\end{eqnarray}
For $p_\varepsilon(z)$ we compute the lemniscate and we get the following picture:

\begin{figure}[hb]
	\centering
	\begin{subfigure}[b]{0.49\textwidth}
		\includegraphics[width=\textwidth]{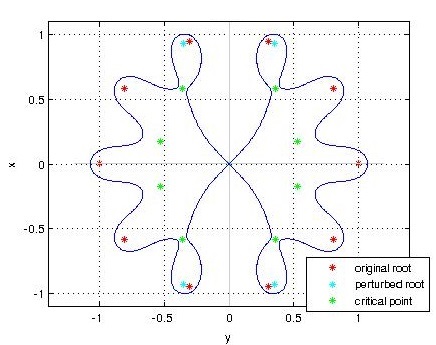}
		\caption{Level 1}
        \label{fig3a}
    \end{subfigure}
	\begin{subfigure}[b]{0.49\textwidth}
		\includegraphics[width=\textwidth]{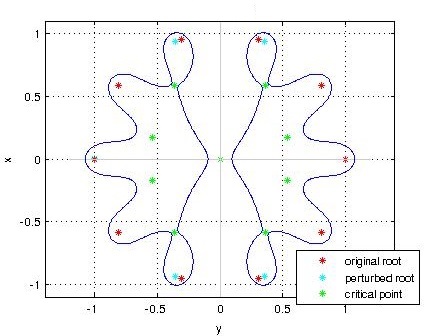}
		\caption{Level 0.999}
        \label{fig3b}
    \end{subfigure}
	\caption{$p(z)=z^{10}-1$}
	\label{fig3}
\end{figure}

%%%%%%%%%%%%%%%%%%%%%%%%%%%% picture 5: degree 10 
\vspace{2cm}
For $p(z)=z^{12}-1$ we first apply a rotation with $\pi/12$ and we get a new polynomial $p(z)=z^{12}+1$ to which we apply the perturbation with $\varepsilon.$ In this case the results can be seen in Figure \ref{fig4}:
\newpage
\begin{figure}[hb]
	\centering
	\begin{subfigure}[b]{0.49\textwidth}
		\includegraphics[width=\textwidth]{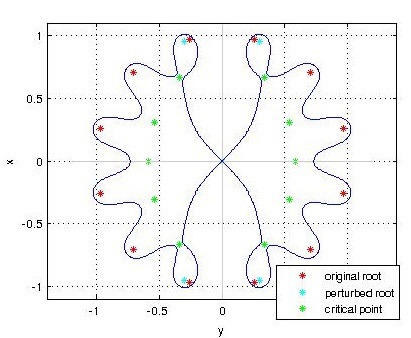}
		\caption{Level 1}
        \label{fig4a}
    \end{subfigure}
	\begin{subfigure}[b]{0.49\textwidth}
		\includegraphics[width=\textwidth]{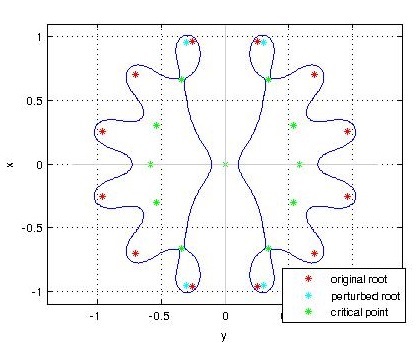}
		\caption{Level 0.999}
        \label{fig4b}
    \end{subfigure}
	\caption{$p(z)=z^{12}-1$}
	\label{fig4}
\end{figure}
%%%%%%%%%%%%%%%%%%%%%%%%%%%% picture 6: degree 12
\vspace{2cm}
And the last case that we discuss here is $p(z)=z^{14}-1.$ The roots of this polynomial do not need any rotation, so we just change the $4$ roots that are closest to $i\mathbb{R}$ with  $\varepsilon$ and then we compute and plot the lemniscate:

\begin{figure}[hb]
	\centering
	\begin{subfigure}[b]{0.49\textwidth}
		\includegraphics[width=\textwidth]{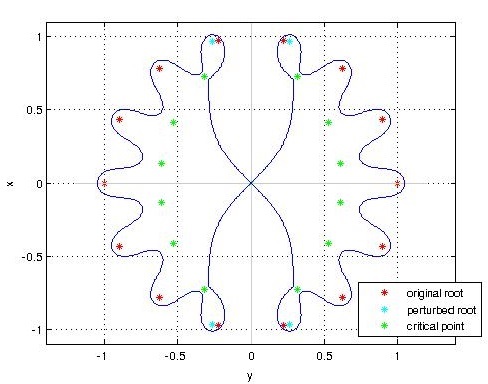}
		\caption{Level 1}
        \label{fig5a}
    \end{subfigure}
	\begin{subfigure}[b]{0.49\textwidth}
		\includegraphics[width=\textwidth]{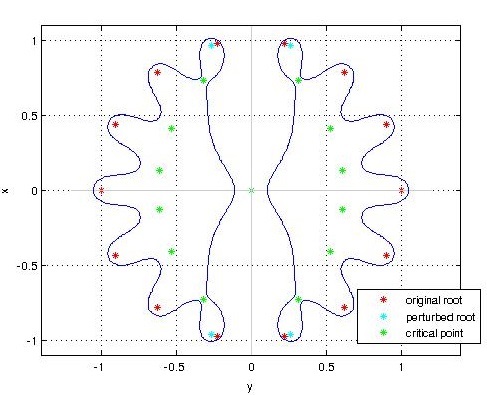}
		\caption{Level 0.999}
        \label{fig5b}
    \end{subfigure}
	\caption{$p(z)=z^{14}-1$}
	\label{fig5}
\end{figure}
%%%%%%%%%%%%%%%%%%%%%%%%%%% picture 7: degree 14. 
 
\section{Pictures with lowest level of the lemniscate that holds the separation} \label{AppendixE}

The following pictures are what we have riched when perturbing the roots with $\varepsilon=\pi/70$ and decreasing the level to its lowest value that ensures the separation into two parts, each on one side of the imaginary axis :

\begin{figure}[hb]
    \centering
    \begin{subfigure}[b]{0.48\textwidth}
        \includegraphics[width=\textwidth]{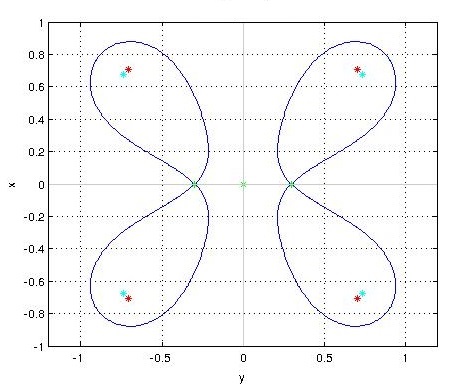}
        \caption{$p(z)=z^{4}-1$}
        \label{fig6a}
        \end{subfigure}
    \begin{subfigure}[b]{0.49\textwidth}
        \includegraphics[width=\textwidth]{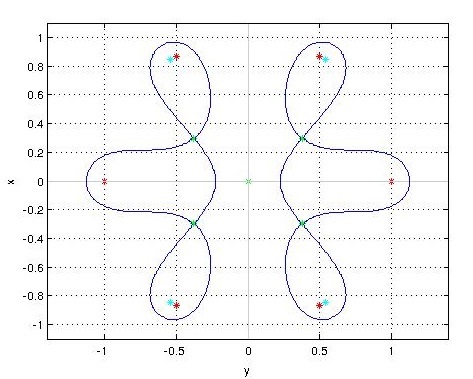}
        \caption{$p(z)=z^{6}-1$}
        \label{fig6b}
    \end{subfigure}
    \begin{subfigure}[b]{0.49\textwidth}
        \includegraphics[width=\textwidth]{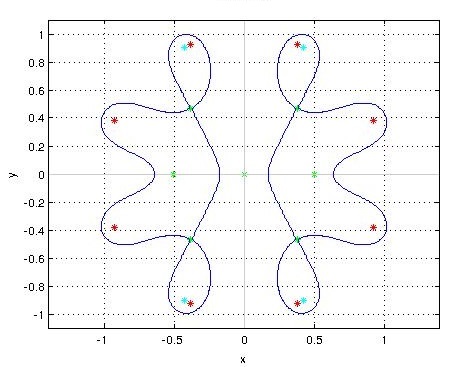}
        \caption{$p(z)=z^{8}-1$}
        \label{fig6c}
    \end{subfigure}
    \begin{subfigure}[b]{0.49\textwidth}
        \includegraphics[width=\textwidth]{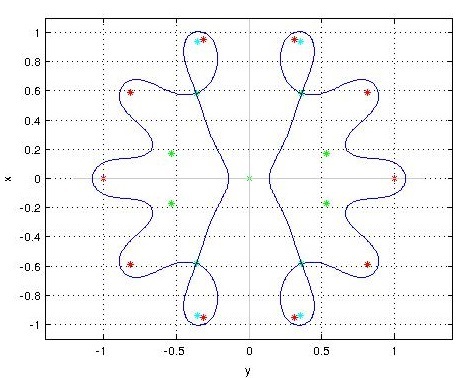}
        \caption{$p(z)=z^{10}-1$}
        \label{fig6d}   
    \end{subfigure}
    \begin{subfigure}[b]{0.49\textwidth}
        \includegraphics[width=\textwidth]{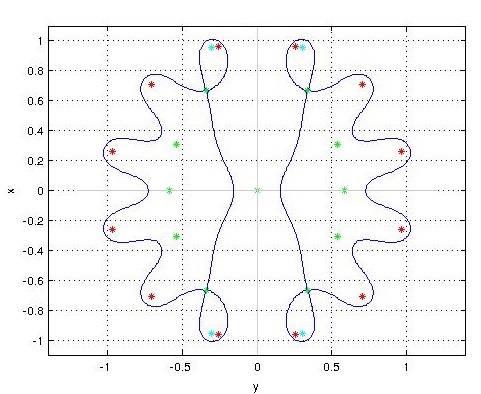}
        \caption{$p(z)=z^{12}-1$}
        \label{fig6e}
    \end{subfigure}
    \begin{subfigure}[b]{0.49\textwidth}
        \includegraphics[width=\textwidth]{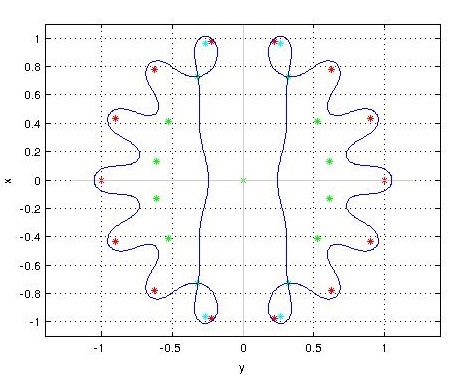}
        \caption{$p(z)=z^{14}-1$}
        \label{fig6f}
    \end{subfigure}
    \caption{Separation with lowest level for $\varepsilon=\pi/70 $}\label{fig6}
\end{figure}

\end{appendices}

\end{document}